\numberwithin{equation}{section}
\newtheorem{thm}[equation]{Theorem}
\newtheorem{cor}[equation]{Corollary}
\newtheorem{lem}[equation]{Lemma}
\newtheorem{conj}[equation]{Conjecture}
\newtheorem{prop}[equation]{Proposition}
\theoremstyle{definition}
\newtheorem{rem}[equation]{Remark}
\newcommand{\id}{\operatorname{id}}
\newcommand{\sgn}{\operatorname{sgn}}
\newcommand{\res}{\operatorname{res}}
\newcommand{\Ext}{\operatorname{\mathrm{Ext}}}
\renewcommand{\mod}{\operatorname{mod\,}}
\begin{document}
\title[Complexity of Specht moules]{The complexity of the Specht modules corresponding to hook
partitions}
\author{Kay Jin Lim}

\maketitle
\begin{abstract} We show that the complexity of the Specht module
corresponding to any hook partition is the $p$-weight of the
partition. We calculate the variety and the complexity of the signed
permutation modules. Let $E_s$ be a representative of the conjugacy class containing an elementary
abelian $p$-subgroup of a symmetric group generated by $s$ disjoint $p$-cycles. We give formulae for
the generic Jordan types of signed permutation modules
restricted to $E_s$ and of Specht modules corresponding to hook
partitions $\mu$ restricted to $E_s$ where
$s$ is the $p$-weight of $\mu$.
\end{abstract}

\section{Introduction}

Alperin and Evens \cite{JALE} introduced the complexity of finitely
generated modules over finite group algebras. Meanwhile, Carlson
\cite{JC} had been studying some varieties for finitely generated
modules over finite group algebras. Over elementary abelian
$p$-groups, he showed that the complexity of a module is precisely
the dimension of the cohomological variety and the dimension of the
rank variety corresponding to the module. Avrunin and Scott
\cite{GALS} proved an analogue of Quillen's Stratification. The
complexity of a $kG$-module can be determined by looking at the
restriction of the module to the elementary abelian $p$-subgroups of
the group $G$. The study of the varieties for modules over
elementary abelian $p$-groups is closely related to the study of the
notion of generic Jordan types of the modules introduced by
Wheeler \cite{WW}.

A partition $\mu=({\mu_1}^{a_1}, \ldots, {\mu_s}^{a_s})$ is $p\times
p$ if for each $1\leq i\leq s$, both $\mu_i$ and $a_i$ are multiples of
$p$. The VIGRE group in Georgia made the following conjecture.

\begin{conj}[VIGRE 2004]\label{conjecture VIGRE two}
The complexity of the Specht module $S^\mu$ is the $p$-weight of the
partition $\mu$ if and only if $\mu$ is not $p\times p$.
\end{conj}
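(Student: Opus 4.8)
Since the paper concerns hook partitions, I will treat that case of the conjecture. A hook $\mu=(n-r,1^r)$ is never $p\times p$: the part $n-r$ occurs with multiplicity $1$, and the part $1$ is not a multiple of $p$; so the statement to be proved is that $c(S^{(n-r,1^r)})$ equals the $p$-weight $w$ of $(n-r,1^r)$. For the upper bound, recall that by the Avrunin--Scott stratification it suffices to bound $\dim V_E(S^{(n-r,1^r)})$ over elementary abelian $E\le S_n$. The module $S^{(n-r,1^r)}$ lies in the block of $kS_n$ whose $p$-core has size $n-pw$ and whose weight is $w$; its defect group $D$ is conjugate to a Sylow $p$-subgroup of $S_{pw}$, and therefore has $p$-rank $w$. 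Since every module in this block is a direct summand of $\operatorname{Ind}_D^{S_n}\operatorname{Res}_D(-)$, we get $c(S^{(n-r,1^r)})\le c_{kD}(\operatorname{Res}_D S^{(n-r,1^r)})\le p\text{-rank}(D)=w$. Hence the whole point is the reverse inequality $c(S^{(n-r,1^r)})\ge w$.

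For the lower bound I would realise the Specht module inside an exterior algebra. Let $V=k^n$ be the natural permutation module, so that $\Lambda^jV$ is the signed permutation module $\operatorname{Ind}_{S_j\times S_{n-j}}^{S_n}(\operatorname{sgn}\boxtimes k)$; contraction with the $S_n$-invariant vector $e_1+\dots+e_n$ is $S_n$-equivariant and gives an exact Koszul complex
\[
0\to\Lambda^nV\to\Lambda^{n-1}V\to\cdots\to\Lambda^1V\to\Lambda^0V\to 0 .
\]
Each $\Lambda^jV$ has a Specht filtration with factors $S^{(n-j,1^j)}$ and $S^{(n-j+1,1^{j-1})}$, and, up to duality and a twist by $\operatorname{sgn}$ (neither of which affects varieties over $S_n$), one has $S^{(n-r,1^r)}\cong\ker(\Lambda^rV\to\Lambda^{r-1}V)$. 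Now let $E=E_w\le S_{pw}\le S_n$ (possible, since $pw\le n$) be generated by $w$ disjoint $p$-cycles. Then $V\downarrow_E\cong N_1\oplus\cdots\oplus N_w\oplus k^{\,n-pw}$, where $N_i$ is the regular module of the $i$th cyclic factor inflated to $E$; for a generic cyclic shifted subgroup $C\le E$ each $N_i$ restricts to the free module $kC$, so $V\downarrow_C\cong(kC)^{\oplus w}\oplus k^{\,n-pw}$. Using $\Lambda^\bullet(A\oplus B)=\Lambda^\bullet A\otimes\Lambda^\bullet B$ together with the facts that $\Lambda^j(kC)$ is free for $0<j<p$ and is the trivial module for $j\in\{0,p\}$, one writes each $\Lambda^jV\downarrow_C$ explicitly as a sum of a free module and a trivial module, the trivial part having dimension the coefficient of $x^j$ in $(1+x)^{\,n-pw}(1+x^p)^{\,w}$.

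Feeding this into the (exact) restricted Koszul complex, one computes the generic Jordan type of $S^{(n-r,1^r)}\downarrow_E$ and shows it contains a Jordan block of size $<p$. That forces $V_E(S^{(n-r,1^r)})=\mathbb A^w$, hence $c(S^{(n-r,1^r)})\ge c_{kE}(\operatorname{Res}_E S^{(n-r,1^r)})=w$; combined with the upper bound this gives $c(S^{(n-r,1^r)})=w$, which is the hook case of Conjecture~\ref{conjecture VIGRE two}.

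The easy sub-case is $p\nmid\binom{n-1}{r}=\dim S^{(n-r,1^r)}$: then $S^{(n-r,1^r)}\downarrow_C$ is non-free for \emph{every} nonzero $C\le E$ (a $k[t]/(t^p)$-module of dimension prime to $p$ cannot be free), so $V_E=\mathbb A^w$ at once. The main obstacle is the case $p\mid\binom{n-1}{r}$: the restricted Koszul complex does not split as a sum of restrictions of Specht modules, because $S^{(n-r,1^r)}$ and $S^{(n-r+1,1^{r-1})}$ may lie in the same block, in which case the extension $S^{(n-r,1^r)}\hookrightarrow\Lambda^rV\twoheadrightarrow S^{(n-r+1,1^{r-1})}$ is non-split and one cannot simply telescope dimensions of fixed points. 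One must genuinely control how the contraction differential interacts with the generic free-plus-trivial decomposition over $kC$ --- that is, determine the Jordan type of $\ker(\Lambda^rV\downarrow_C\to\Lambda^{r-1}V\downarrow_C)$, not just its dimension --- and then check, as a combinatorial fact about $n$, $r$, $p$ and $w$, that a non-maximal Jordan block always survives. This Jordan-type bookkeeping for the signed permutation modules $\Lambda^jV$ restricted to $E_w$, and for the resulting subquotient, is where I expect essentially all of the work to lie; the rest is formal.
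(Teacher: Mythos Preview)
Your outline is sound and overlaps with the paper's strategy: both restrict to $E_w$, both use that $\Lambda^r V=M((n-r)|(r))$ has a two-step Specht filtration by hooks, and both reduce the lower bound to showing the restriction is not generically free. The upper bound argument is the same.

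There are two substantive divergences. First, your case split is by $p\mid\binom{n-1}{r}$, whereas the paper splits by $p\mid n$. When $p\nmid n$ the two hook factors $S^{(a,1^b)}$ and $S^{(a+1,1^{b-1})}$ of $M((a)|(b))$ have distinct $p$-cores, so by Nakayama the filtration is a direct sum; the paper then computes the stable generic Jordan type of $M((a)|(b)){\downarrow_{E_s}}$ via Mackey (your generating-function observation is essentially this computation) and obtains the Jordan type of each hook by induction on $b$. This handles strictly more hooks than your dimension criterion and yields an explicit closed formula for the stable type, not just non-freeness.

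Second, and more important, the paper does \emph{not} attack the hard case ($p\mid n$) by analysing the Koszul differential as you propose. Instead it passes to a hook of size $n+1$: by the Branching Theorem there is a short exact sequence
\[
0\to S^{(a-1,1^{b+1})}\to S^{(a,1^{b+1})}{\downarrow_{\mathfrak{S}_n}}\to S^{(a,1^b)}\to 0,
\]
with the middle term already understood from the $p\nmid(n+1)$ case. The crux is then to show this sequence splits over the generic cyclic shifted subgroup $\langle u_\alpha\rangle$; the paper does this by a direct combinatorial analysis of how the generators $g_i$ act on standard polytabloids (via Garnir relations), proving that every standard basis vector in the ``new'' piece is either fixed by $u_\alpha$ or generates a free $k\langle u_\alpha\rangle$-summand. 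Your proposed route---controlling the Jordan type of the Koszul kernel directly---may well succeed, but it is a genuinely different calculation from the one the paper carries out, and you have not supplied it; that is the real gap in your proposal.
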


In \cite{KJL}, we studied the support varieties and the complexities
for Specht modules corresponding to some $p$-regular partitions and
the partition $(p^p)$. We showed that, for the case of abelian
defects, Conjecture \ref{conjecture VIGRE two} is true. We also
showed that a large class of Specht modules satisfy Conjecture
\ref{conjecture VIGRE two}. Recently \cite{DH2}, Hemmer shows one
direction of Conjecture \ref{conjecture VIGRE two}, i.e., if a
partition $\mu$ is $p\times p$, then the complexity of the Specht
module $S^\mu$ is strictly less than the $p$-weight of $\mu$.

Let $E$ be an elementary abelian $p$-group of rank $m$. To show that
a $kE$-module $M$ has complexity $m$, it suffices to show that the
module $M$ is not generically free (see Proposition \ref{not
generically free equivalent to maximal complexity}). Let $\mu$ be a
hook partition of $n$, $s$ be the $p$-weight of $\mu$ and $E_s$ be
the elementary abelian $p$-subgroup of $\mathfrak{S}_n$ generated by
the $p$-cycles $((i-1)p+1,(i-1)p+2,\ldots, ip)$ with $1\leq i\leq
s$. We consider the restricted module $S^\mu{\downarrow_{E_s}}$ and
show that the module is not generically free. This implies that the
complexity of the Specht module $S^\mu$ is bounded below by $s$.
Since the value $s$ is also the $p$-rank of a defect group of the
block containing $S^\mu$, we get the other half of Conjecture
\ref{conjecture VIGRE two} for hook partitions.

\begin{rem} Following 4.1 of \cite{KJL}, one may wonder if $V_{\mathfrak{S}_n}(S^\mu)=\res^\ast_{\mathfrak{S}_,D_{\widetilde{\mu}}}V_{D_{\widetilde{\mu}}}(k)$ whenever $\mu$ is not $p\times p$. A counterexample to the above statement is the partition $\mu=(7,1^3)$ where the vertex of $S^\mu$ is $C_3\times C_3\times C_3$ \cite{MW} and $D_{\widetilde{\mu}}\cong C_3\wr C_3$.
\end{rem}

\begin{thm}\label{complexity corresponding to hook partitions} For any hook partition $\mu$, the
complexity of the Specht module $S^\mu$ is exactly the $p$-weight of
$\mu$.
\end{thm}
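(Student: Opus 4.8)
The plan is to prove the upper bound in the standard way and then to obtain the lower bound by a generic Jordan type computation. The upper bound $\mathrm{cx}(S^\mu)\le s$, with $s$ the $p$-weight of $\mu$, is routine: the complexity of a module is bounded by the $p$-rank of a defect group of its block, and a block of $\mathfrak S_n$ of weight $s$ has a defect group whose $p$-rank is exactly $s$. A hook partition is never $p\times p$, so Conjecture~\ref{conjecture VIGRE two} predicts equality, and everything reduces to $\mathrm{cx}(S^\mu)\ge s$. By the Avrunin--Scott stratification theorem and Proposition~\ref{not generically free equivalent to maximal complexity}, it suffices to show that $S^\mu{\downarrow_{E_s}}$ is not generically free as a $kE_s$-module, where $E_s\le\mathfrak S_n$ is the group generated by the $s$ disjoint $p$-cycles fixed in the introduction.

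To get at the restriction I would realize $S^\mu$ inside the exterior algebra of the natural permutation module $M=k^n$. Write $\mu=(n-r,1^r)$. The Koszul complex $(\Lambda^\bullet M,\,\partial)$, with differential $\partial=\iota_\epsilon$ the contraction against the $\mathfrak S_n$-fixed covector $\epsilon=\sum_i e_i^\ast$, is exact; its terms $\Lambda^j M$ are exactly the signed permutation modules; and, recovering a short exact sequence going back to Peel, the hook Specht module is (up to duality) the syzygy
\[
Z^r:=\ker\!\big(\partial\colon\Lambda^r M\to\Lambda^{r-1}M\big)=\operatorname{im}\!\big(\partial\colon\Lambda^{r+1}M\to\Lambda^r M\big)\cong S^{(n-r,1^r)},
\]
sitting in $0\to Z^r\to\Lambda^r M\to Z^{r-1}\to0$ with $Z^{r-1}\cong S^{(n-r+1,1^{r-1})}$ and $Z^0\cong k$. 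The crucial point is that $\partial$ is a $k\mathfrak S_n$-module map, hence commutes with every element of $k\mathfrak S_n$; so after restricting to $E_s$ and passing to the generic cyclic shifted subgroup $\langle u\rangle$ over $K=k(x_1,\dots,x_s)$, where $u=1+\sum_i x_i(g_i-1)$ satisfies $u^p=1$, we obtain an exact complex of $KC_p$-modules on which $u$ acts compatibly, so that the ranks of powers of $u-1$ can be transported along it.

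Next I would compute the generic Jordan type of the signed permutation modules. As a $kE_s$-module $M\cong\bigoplus_{i=1}^s(kC_p)_i\oplus k^{\,c}$, where $(kC_p)_i$ is the natural module for the $i$-th $p$-cycle (free over that $C_p$, trivial for the other generators) and $c=n-ps$ is the size of the $p$-core of $\mu$ (itself a hook); over $\langle u\rangle$ this becomes $(KC_p)^{\oplus s}\oplus K^{\,c}$. Using $\Lambda^r(A\oplus B)=\bigoplus_j\Lambda^j A\otimes\Lambda^{r-j}B$, the key observation is that an exterior power $\Lambda^j$ of a free $kC_p$-module is again free for $0<j<p$, while $\Lambda^0$ and $\Lambda^p$ of it are trivial: on each orbit of $C_p$ acting on the $j$-subsets of $\mathbb Z/p$ the module $\Lambda^j(kC_p)$ is a rank-$p$ signed permutation module whose total sign around the orbit is forced to be trivial because the permutation has order $p$. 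Since a projective tensors to a projective over $KC_p$, it follows that $\Lambda^r M{\downarrow_{\langle u\rangle}}$ is free except for a trivial direct summand of dimension $\delta_r=\sum_{l\ge0}\binom{s}{l}\binom{c}{r-pl}$; that is, writing $[a]$ for a Jordan block of size $a$, its generic Jordan type is $[p]^{(\binom{n}{r}-\delta_r)/p}\oplus[1]^{\,\delta_r}$.

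Finally I would propagate this along the exact complex $(\Lambda^\bullet M{\downarrow_{\langle u\rangle}},\partial)$ to obtain a closed formula for the generic Jordan type of $Z^r\cong S^\mu{\downarrow_{E_s}}$ --- tracking $\dim\ker(u-1)^k$ on the successive syzygies $Z^j$, using that $\partial$ commutes with $u-1$ and that the complex is acyclic --- and then read off from that formula that $S^\mu{\downarrow_{E_s}}$ has a Jordan block of size strictly less than $p$ whenever $s\ge1$. Consequently $S^\mu{\downarrow_{E_s}}$ is not generically free, its rank variety is all of $\mathbb A^s$, so $\mathrm{cx}(S^\mu)\ge s$, and with the upper bound $\mathrm{cx}(S^\mu)=s$. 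I expect this last step --- extracting the generic Jordan type of the Specht syzygy $Z^r$ from that of the signed permutation modules through the Koszul complex, and in particular checking that a sub-maximal Jordan block always survives --- to be the main obstacle, together with the bookkeeping of the edge behaviour (when $p\mid n$, when the hook $p$-core is large, and for small $s$) where the $\delta_r$'s and the degrees $0$ and $p$ of the individual $C_p$-blocks interact delicately.
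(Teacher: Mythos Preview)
Your Koszul-complex packaging is the same device the paper uses, in different language: $\Lambda^rM\cong M((n-r)|(r))$ as $k\mathfrak S_n$-modules, and your short exact sequence $0\to Z^r\to\Lambda^rM\to Z^{r-1}\to 0$ is precisely the two-step Specht filtration $0\to S^{(n-r,1^r)}\to M((n-r)|(r))\to S^{(n-r+1,1^{r-1})}\to 0$ that the paper invokes via the Littlewood--Richardson rule. Your formula for $\delta_r$ is correct and is the two-part case of the paper's Theorem~\ref{sgjt for signed permutation module}.

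The genuine gap is the last step. Knowing that each $\Lambda^jM{\downarrow_{\langle u\rangle}}$ has Jordan type $[p]^{a_j}\oplus[1]^{\delta_j}$ and that the complex is exact does \emph{not} determine the Jordan types of the $Z^j$, because short exact sequences of $kC_p$-modules are not additive on Jordan type even when the middle term has only blocks of sizes $1$ and $p$: for $p>2$ the sequence $0\to A\to [p]\oplus[1]\to[2]\to 0$ can have $A\cong[p-1]$ or $A\cong[p-2]\oplus[1]$, depending on the map. ``Tracking $\dim\ker(u-1)^k$'' yields only the snake-lemma inequalities, with slack equal to the rank of the connecting homomorphism, which you have not controlled. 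The paper meets exactly this obstruction and resolves it case by case: when $p\nmid n$ the two Specht factors have distinct $p$-cores, so by Nakayama the sequence already splits over $k\mathfrak S_n$ and an induction on the leg length runs (Theorem~\ref{generic jordan type of hook partition of case p not a multiple}); when $p\mid n$ no such block separation is available, and the paper instead proves that a Branching-theorem short exact sequence splits \emph{generically over} $\langle u_\alpha\rangle$, via a hands-on analysis of how the $g_i$ move standard polytabloids (Lemmas~\ref{subhooks occur} and~\ref{lemma to hook multiple of p}, feeding Theorem~\ref{generic jordan type of hook partition of case p a multiple}). Any completion of your approach must supply an equivalent generic-splitting or connecting-map argument; abstract propagation through the exact complex is not by itself enough.
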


\section{Background materials and notations}

Most of the basic materials about group cohomology and the
representation theory of symmetric groups can be found in \cite{DB}
and \cite{GJAK} respectively.

Let $G$ be a finite group, $k$ be an algebraically closed field of
characteristic prime $p$ and $\Ext^\ast_{kG}(M,M)$ be the cohomology
ring of a finitely generated $kG$-module $M$. The cohomological
variety $V_G(M)$ for the module $M$ is the set of maximal ideals spectrum of
$\Ext^{\bullet}_{kG}(k,k)$ containing the kernel of the map
$\Phi_M:\Ext^{\bullet}_{kG}(k,k)\xrightarrow{\otimes_k M}
\Ext^\ast_{kG}(M,M)$ where $$\Ext^{\bullet}_{kG}(k,k)=\left
\{\begin{array}{ll} \Ext^{\text{ev}}_{kG}(k,k)& \text{$p$ is odd}\\
\Ext^\ast_{kG}(k,k)& p=2.\end{array}\right .$$ We have Theorem \cite{GALS}
$$V_G(M)=\bigcup_{E\in\mathcal{E}(G)}\res^\ast_{G,E}V_E(M)$$ where
$\mathcal{E}(G)$ is a set of representatives for the conjugacy
classes of elementary abelian $p$-subgroups of $G$ and
$\res^\ast_{G,E}:V_E(k)\to V_G(k)$ is the map induced by the
restriction $\res_{G,E}:\Ext^\bullet_{kG}(k,k)\to
\Ext^\bullet_{kE}(k,k)$. So $\dim
V_G(M)=\max_{E\in\mathcal{E}(G)}\{\dim V_E(M)\}$. Let $E$ be an
elementary abelian $p$-group of rank $n$ generated by the elements
$g_1,g_2,\ldots,g_n$. The rank variety $V_E^\sharp(M)$ of a finitely
generated $kE$-module $M$ is the set
$$\{0\neq \alpha\in k^n\,|\,\text{$M{\downarrow_{k\langle
u_\alpha\rangle}}$ is not free}\}\cup \{0\}$$ where
$u_\alpha=1+\sum^n_{i=1}\alpha_i(g_i-1)$ and
$\alpha=(\alpha_1,\ldots,\alpha_n)\in k^n$. Avrunin and Scott
\cite{GALS} showed that $V_E^\sharp(M)\cong V_E(M)$.

Let $\alpha$ be a generic point in $k^n$. The generic Jordan type of a
finitely generated $kE$-module $M$ is the Jordan type of the
restricted module $M{\downarrow_{\langle u_\alpha\rangle}}$ where
$\langle u_\alpha\rangle $ is the cyclic group $C_p$ of order $p$
described above \cite{WW}. In this case, we write $(1^{n_1},\ldots,p^{n_p})$ for the generic Jordan type of $M$ if the number of Jordan type of size $1\leq i\leq p$ is $n_i$. Suppose that $\beta$ is a multiple of $\alpha$. We
have $M{\downarrow_{\langle u_\beta\rangle}}\cong
M{\downarrow_{\langle u_\alpha\rangle}}$. We write
$[\alpha]^\ast(M)$ for the isomorphism class of $kC_p$-modules
containing $M{\downarrow_{\langle u_\alpha\rangle}}$. We now state a
partial result of 4.7 \cite{EFJPAS} for our case.

\begin{prop}[Part of 4.7 \cite{EFJPAS}]
Let $E$ be an elementary abelian $p$-group, $M$ and $N$ be finitely
generated $kE$-modules and $\alpha\in k^n$ be a generic point. We
have $[\alpha]^\ast(M\oplus N)\cong [\alpha]^\ast(M)\oplus
[\alpha]^\ast(N)$.
\end{prop}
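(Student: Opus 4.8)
The plan is to reduce the statement to two elementary facts: that restriction to a cyclic subgroup commutes with direct sums, and that (by irreducibility of $k^n$) a single generic point can be chosen to witness the generic Jordan types of $M$, of $N$ and of $M\oplus N$ simultaneously.

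First I would record that for \emph{every} $\alpha\in k^n$ the element $u_\alpha$ acts block-diagonally on $(M\oplus N){\downarrow_{\langle u_\alpha\rangle}}$, so that $(M\oplus N){\downarrow_{\langle u_\alpha\rangle}}\cong M{\downarrow_{\langle u_\alpha\rangle}}\oplus N{\downarrow_{\langle u_\alpha\rangle}}$ as $k\langle u_\alpha\rangle$-modules. In particular, for each $1\leq j\leq p-1$ the rank of $(u_\alpha-1)^j$ on $M\oplus N$ is the sum of its ranks on $M$ and on $N$, and the Jordan type of the left-hand restriction is the multiset union of the Jordan types of the two restrictions on the right.

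Next, for a finitely generated $kE$-module $L$ let $U_L\subseteq k^n$ be the set of $\alpha$ for which $L{\downarrow_{\langle u_\alpha\rangle}}$ realises the generic Jordan type of $L$. In a fixed $k$-basis of $L$ the entries of the matrix of $(u_\alpha-1)^j$ are polynomial in $\alpha$, so for each $j$ the locus where $\rank(u_\alpha-1)^j$ attains its maximal value is Zariski open, and nonempty, hence dense in the irreducible variety $k^n$; intersecting these finitely many dense open sets over $1\leq j\leq p-1$ shows that $U_L$ is open and dense and that the generic Jordan type of $L$ is precisely the Jordan type realised on $U_L$. (Equivalently, one can pass to the scheme-theoretic generic point of $\operatorname{Spec}k[x_1,\dots,x_n]$ and argue by specialisation; the semicontinuity formulation above is what I would write out.) I apply this with $L=M$, $L=N$ and $L=M\oplus N$.

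Finally, since $k^n$ is irreducible the intersection $U_M\cap U_N\cap U_{M\oplus N}$ is again open and dense, so it is nonempty and contains every sufficiently generic $\alpha$. For such an $\alpha$ the first step gives $(M\oplus N){\downarrow_{\langle u_\alpha\rangle}}\cong M{\downarrow_{\langle u_\alpha\rangle}}\oplus N{\downarrow_{\langle u_\alpha\rangle}}$, while the choice of $\alpha$ forces these three restrictions to realise the generic Jordan types of $M\oplus N$, $M$ and $N$ respectively; rewriting this in the $[\alpha]^\ast$ notation gives $[\alpha]^\ast(M\oplus N)\cong[\alpha]^\ast(M)\oplus[\alpha]^\ast(N)$. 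The only delicate point is the semicontinuity-plus-irreducibility bookkeeping of the third paragraph, needed to guarantee that one generic point simultaneously witnesses all three generic Jordan types; once that is in place the rest is formal.
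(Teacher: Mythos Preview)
Your argument is correct: restriction to $\langle u_\alpha\rangle$ obviously commutes with direct sums for every $\alpha$, and the semicontinuity of the ranks of $(u_\alpha-1)^j$ together with the irreducibility of $k^n$ lets you choose a single $\alpha$ in $U_M\cap U_N\cap U_{M\oplus N}$ witnessing all three generic Jordan types at once. This is exactly the standard justification.

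Note, however, that the paper does not supply its own proof of this proposition: it is quoted as part of 4.7 of \cite{EFJPAS} and used as a black box. So there is no proof in the paper to compare against. What you have written is essentially the argument behind the cited result (the nonempty dense open locus on which the Jordan type is constant is precisely the content of the ``generic Jordan type'' construction in \cite{EFJPAS} and \cite{WW}), so your approach is not so much different from the paper's as it is a self-contained unpacking of what the paper imports by citation.
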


The stable generic Jordan type of $M$ is the generic Jordan type of
$M$ modulo its projective summands. With the notation introduced earlier, the stable generic Jordan type of $M$ is $(1^{n_1},\ldots, (p-1)^{n_{p-1}})$.

\begin{prop}\label{not generically free equivalent to maximal
complexity} Let $E$ be an elementary abelian $p$-group of rank $n$.
A $kE$-module $M$ is not generically free if and only if
$V^\sharp_E(M)=V^\sharp_E(k)$.
\end{prop}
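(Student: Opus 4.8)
The plan is to reduce everything to three facts: that the rank variety $V_E^\sharp(M)$ is a Zariski-closed (indeed homogeneous) subset of $k^n$, that $V_E^\sharp(k)=k^n$, and that $k^n$ is irreducible. The second is immediate: the trivial module restricted to any shifted cyclic subgroup $\langle u_\alpha\rangle\cong C_p$ with $\alpha\neq 0$ is one-dimensional, hence not free since $p\geq 2$, so by definition $V_E^\sharp(k)=k^n$. Thus the proposition is just the assertion ``$M$ is not generically free $\iff V_E^\sharp(M)=k^n$'', i.e. that the rank variety of $M$ is the whole ambient space precisely when $M$ fails to be generically free.

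For the closedness I would invoke Avrunin--Scott \cite{GALS}, by which $V_E^\sharp(M)\cong V_E(M)$ is a closed homogeneous subvariety of $V_E(k)=k^n$; alternatively one argues directly. Writing $kC_p=k[t]/(t^p)$ with $t=u_\alpha-1$, a finite-dimensional $kC_p$-module of dimension $d$ is free exactly when $p\mid d$ and $t^{p-1}$ acts with rank $d/p$, and $t^{p-1}$ always acts with rank at most $d/p$ (its rank counts the Jordan blocks of size $p$). Fixing a $k$-basis of $M$, the matrix of $(u_\alpha-1)^{p-1}$ has entries polynomial in the coordinates of $\alpha$, so the ``free locus'' $U:=k^n\setminus V_E^\sharp(M)$ is the non-vanishing locus of the $(d/p)\times(d/p)$ minors of that matrix, hence Zariski-open (and empty when $p\nmid d$). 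In particular $V_E^\sharp(M)$ is closed.

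Now $k^n$ is irreducible, so $U$ is either empty or dense. If $V_E^\sharp(M)=V_E^\sharp(k)=k^n$, then $U=\emptyset$, so $M{\downarrow_{\langle u_\alpha\rangle}}$ is non-free for every $\alpha\neq 0$, in particular at a generic point, and $M$ is not generically free. Conversely, if $V_E^\sharp(M)\subsetneq k^n$, then $U$ is a non-empty open subset of the irreducible variety $k^n$, hence dense, hence contains a generic point $\alpha$; at such $\alpha$ the module $M{\downarrow_{\langle u_\alpha\rangle}}$ is free, so the generic Jordan type of $M$ consists only of blocks of size $p$ and $M$ is generically free. The two implications together give the proposition.

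The one step to be careful with — the ``main obstacle'', though it is more a matter of bookkeeping than of difficulty — is squaring the word \emph{generic}: one must know that the Jordan type of $M{\downarrow_{\langle u_\alpha\rangle}}$ is constant on a dense open subset of $k^n$, so that ``the generic Jordan type'' and hence ``generically free'' are well defined; this is precisely the semicontinuity content packaged above as the openness of $U$. Once that is in place, irreducibility of $k^n$ forces any generic point to lie in $U$ as soon as $U\neq\emptyset$, and the equivalence drops out; everything else is the standard rank-variety formalism recalled in Section 2.
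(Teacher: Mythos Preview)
Your proof is correct and follows essentially the same approach as the paper: both rest on the fact that $V_E^\sharp(M)$ is a closed homogeneous subvariety of $k^n$, so a generic point avoids it whenever it is proper. The paper only writes out the nontrivial implication, citing Carlson for closedness and phrasing the avoidance via a dimension count on the line $l_\alpha$, whereas you give a self-contained rank argument for closedness and use density of the open complement, and you also make the trivial direction and the well-definedness of the generic Jordan type explicit; the underlying idea is the same.
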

\begin{proof} Suppose that $V^\sharp_E(M)$ has dimension $m<n$. Let
$\alpha\in k^n$ be a generic point and $l_\alpha\subseteq k^n$ be
the line containing the point $\alpha$. Note that $V^\sharp_E(M)$ is
a closed homogeneous affine variety (see Theorem 4.3 of \cite{JC1}).
Since $m+1\leq n$, we have $V_E^\sharp(M)\cap l_\alpha=\{0\}$, i.e.,
$M$ is generically free.
\end{proof}

Let $n$ be a natural number. A partition $\mu$ of $n$ is a
non-increasing sequence of positive integers $(\mu_1,\ldots,\mu_s)$
such that $\sum^s_{i=1}\mu_i=n$. In this case, we write $n=|\mu|$.
The Young diagram $[\mu]$ is the subset of $\mathbb{Z}^2$ consisting of
all nodes $(i,j)$ satisfying $1\leq i\leq s$ and $1\leq j\leq
\mu_i$. The $p$-core $\widetilde{\mu}$ of $\mu$ is the partition
corresponding to the Young diagram obtained from $[\mu]$ by removing as
many skew $p$-hooks as possible. The number of skew $p$-hooks
removed from $[\mu]$ to get $[\widetilde{\mu}]$ is called the
$p$-weight of $\mu$. A $\mu$-tableau is an assignment of the numbers
$1,2,\ldots, n$ to the nodes of the Young diagram $[\mu]$. For each
$(i,j)\in[\mu]$, we write $t_{ij}$ for the number assigned to
$(i,j)$. For each number $1\leq m\leq n$, we write $t_m$ for the
node $(i,j)$ such that $t_{ij}=m$. We write $R_i(t)$ for the set
consisting of the numbers in the $i$th row of $t$, i.e.,
$R_i(t)=\{t_{ij}\,|\,j\geq 1\}$. Similarly, we write $C_j(t)$ for
the set consisting of the numbers in the $j$th column of $t$. If
numbers are increasing down each column and along each row of $t$,
we say that $t$ is a standard $\mu$-tableau.

Let $\mathfrak{S}_n$ denote the symmetric group on $n$ letters. The
group acts on the set of all $\mu$-tableaux $t$ by permuting the
numbers assigned to $t$. Let $R_t$ and $C_t$ be the row stabilizer
and column stabilizer of $t$ respectively, i.e.,
$R_t=\mathfrak{S}_{R_1(t)}\times \ldots\times \mathfrak{S}_{R_s(t)}$
and $C_t=\mathfrak{S}_{C_1(t)}\times \ldots \times
\mathfrak{S}_{C_{\mu_1}(t)}$ where $\mathfrak{S}_\Omega$ is the
symmetric group corresponding to a given set $\Omega$. We define an
equivalence relation on the set of all $\mu$-tableaux, $s\sim t$ if
and only if $s=\sigma t$ for some $\sigma\in R_t$. A $\mu$-tabloid
$\{t\}$ is the equivalence class containing the $\mu$-tableau $t$.
The $\mu$-polytabloid corresponding to the $\mu$-tableau $t$ is
$$e_t=\sum_{\sigma\in C_t}\sgn(\sigma)\{\sigma t\}.$$ The $k$-vector space spanned by all
$\mu$-polytabloids forms a $k\mathfrak{S}_n$-module. It is called
the Specht module $S^\mu$. If $t$ is a standard $\mu$-tableau, the
$\mu$-polytabloid $e_t$ is called a standard $\mu$-polytabloid. The
set of all standard $\mu$-polytabloids forms a basis of $S^\mu$, the
standard basis of $S^\mu$. The Young subgroup $\mathfrak{S}_\mu$ of
$\mathfrak{S}_n$ corresponding to $\mu$ is
$$\mathfrak{S}_{\{1,\ldots,\mu_1\}}\times
\mathfrak{S}_{\{\mu_1+1,\ldots, \mu_1+\mu_2\}}\times \ldots \times
\mathfrak{S}_{\{\mu_1+\ldots+\mu_{s-1}+1,\ldots,
\mu_1+\ldots+\mu_s\}}.$$ The signed permutation module corresponding
to a pair of partitions $(\alpha,\beta)$ with $|\alpha|=a$ and $|\beta|=b$ is the induced module
$$M(\alpha|\beta)=(k\boxtimes
\sgn){\uparrow^{\mathfrak{S}_{a+b}}_{\mathfrak{S}_\alpha\times
\mathfrak{S}_\beta}}$$ where $\boxtimes$ denotes the exterior tensor
product of two modules \cite{SD2}. It generalizes the notion of
permutation modules, by taking $b=0$.

For each $1\leq s\leq \lfloor n/p\rfloor$, we write $E_s$ for the
elementary abelian $p$-subgroup of $\mathfrak{S}_n$ generated by the
$p$-cycles $g_i=((i-1)p+1, (i-1)p+2, \ldots, ip)$ with $1\leq i\leq
s$. For each positive integer $i$, we write $I_i$ for the set
$\{(i-1)p+1, (i-1)p+2, \ldots, ip\}$.

\begin{prop} Let $\mu$ be a partition of $n=dp+r$ with $0\leq r\leq
p-1$.
\begin{enumerate}
\item [(i)]\textup{[The Branching Theorem \S9 of \cite{GJ}]} Let $\Omega(\mu)$
be the set of partitions of $n-1$ obtained from $\mu$ by removing a
node. The module $S^\mu{\downarrow_{\mathfrak{S}_{n-1}}}$ has a
Specht filtration with Specht factors $S^\lambda$ one for each
$\lambda\in\Omega(\mu)$.

\item [(ii)]\textup{[Nakayama's Conjecture]} Let $\lambda$ be
another partition of $n$. The Specht modules $S^\mu, S^\lambda$ lie
in the same block if and only if the corresponding partitions
$\mu,\lambda$ have the same $p$-cores.

\item [(iii)] If $|\widetilde{\mu}|>r$, then
$S^\mu{\downarrow_{E_d}}$ is generically free.
\end{enumerate}
\end{prop}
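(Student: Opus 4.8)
\emph{Proof plan.} The plan is to derive this from the standard bound on the complexity of a module in terms of the defect group of its block, together with Proposition~\ref{not generically free equivalent to maximal complexity}. First I would record what the hypothesis $|\widetilde{\mu}|>r$ says numerically: writing $w$ for the $p$-weight of $\mu$, we have $n=|\widetilde{\mu}|+wp$, and comparing with $n=dp+r$ gives $(d-w)p=|\widetilde{\mu}|-r>0$, so $w\le d-1$; in particular $\rank E_d=d>w$.

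Next I would invoke block theory. By part~(ii) the module $S^\mu$ lies in the block $B$ of $k\mathfrak{S}_n$ with $p$-core $\widetilde{\mu}$ and $p$-weight $w$; a defect group $D$ of $B$ is (conjugate in $\mathfrak{S}_n$ to) a Sylow $p$-subgroup of $\mathfrak{S}_{wp}$, and a short base-$p$ computation shows that the $p$-rank of such a group is exactly $w$. Since $B$ is relatively $D$-projective, $S^\mu$ is a direct summand of $(S^\mu{\downarrow_D}){\uparrow^{\mathfrak{S}_n}}$, whence
$$V_{\mathfrak{S}_n}(S^\mu)\subseteq \res^\ast_{\mathfrak{S}_n,D}V_D(S^\mu{\downarrow_D})\subseteq \res^\ast_{\mathfrak{S}_n,D}V_D(k).$$
As $\res^\ast_{\mathfrak{S}_n,D}$ is a finite morphism, the right-hand variety has dimension $\dim V_D(k)$, i.e. the $p$-rank $w$ of $D$. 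Hence $\dim V_{\mathfrak{S}_n}(S^\mu)\le w\le d-1$.

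Finally I would restrict to $E_d$ and argue by contradiction. If $S^\mu{\downarrow_{E_d}}$ were not generically free, then by Proposition~\ref{not generically free equivalent to maximal complexity} together with $V^\sharp_{E_d}\cong V_{E_d}$ we would have $V_{E_d}(S^\mu{\downarrow_{E_d}})=V_{E_d}(k)$, a variety of dimension $d$. Applying the finite morphism $\res^\ast_{\mathfrak{S}_n,E_d}$ and using $\res^\ast_{\mathfrak{S}_n,E_d}V_{E_d}(S^\mu{\downarrow_{E_d}})\subseteq V_{\mathfrak{S}_n}(S^\mu)$ would force $\dim V_{\mathfrak{S}_n}(S^\mu)\ge d$, contradicting the previous step. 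Therefore $S^\mu{\downarrow_{E_d}}$ is generically free.

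The argument is short, and the step I expect to require the most care is the input $\dim V_{\mathfrak{S}_n}(S^\mu)\le w$, that is, the bound $\mathrm{complexity}(S^\mu)\le p\text{-weight}(\mu)$ — concretely, the identification of the defect group of $B$ with a Sylow $p$-subgroup of $\mathfrak{S}_{wp}$ and the computation of its $p$-rank. If one wished to avoid appealing to the structure of defect groups of symmetric-group blocks here, an alternative is to use the Branching Theorem (part~(i)) iteratively: $S^\mu{\downarrow_{\mathfrak{S}_{dp}}}$ has a Specht filtration with factors $S^\lambda$, $\lambda\vdash dp$, and one checks that every such $\lambda$ still has $p$-weight $\le d-1$ (removing fewer than $p$ nodes from $\mu$ cannot produce an empty $p$-core when $|\widetilde{\mu}|>r$); since "generically free'' passes through short exact sequences of $kE_d$-modules — free $kC_p$-modules are injective, and a finite intersection of dense open conditions on the generic line stays dense open — and $E_d\le\mathfrak{S}_{dp}$, this reduces the claim to the same complexity-versus-weight bound for the $S^\lambda$.
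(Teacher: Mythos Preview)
Your main argument is correct and is essentially the paper's own proof: from $|\widetilde{\mu}|>r$ one deduces that the $p$-weight $w$ satisfies $w<d$; the block containing $S^\mu$ has defect group of $p$-rank $w$, so $\dim V_{\mathfrak{S}_n}(S^\mu)\le w$ (the paper cites Proposition~2.1(iv) of \cite{KJL} for exactly this inclusion); and then Proposition~\ref{not generically free equivalent to maximal complexity} forces $S^\mu{\downarrow_{E_d}}$ to be generically free.

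One caution about your aside. The alternative route via the Branching Theorem does not work as stated: the parenthetical claim that ``removing fewer than $p$ nodes from $\mu$ cannot produce an empty $p$-core when $|\widetilde{\mu}|>r$'' is false. For $p=3$ take $\mu=(6,1)$, so $n=7$, $d=2$, $r=1$, and $\widetilde{\mu}=(3,1)$ with $|\widetilde{\mu}|=4>1=r$; removing the single node $(2,1)$ yields $(6)$, whose $3$-core is empty and whose $3$-weight is $2=d$. Thus the Specht filtration of $S^\mu{\downarrow_{\mathfrak{S}_{dp}}}$ can contain a factor of full $p$-weight $d$, and the branching argument does not sidestep the defect-group input.
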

\begin{proof} For a proof of Nakayama's Conjecture, see \S 6 of
\cite{GJAK}. The proof of (iii) is similar to the proof of
Proposition 2.2 (iv) \cite{KJL}. Since the $p$-weight $m_\mu$ of
$\mu$ is strictly less than $d$, a defect group $D_\mu$ of the block
containing $S^\mu$ has $p$-rank $m_\mu$ and
$V_{\mathfrak{S}_n}(S^\mu)\subseteq \res^\ast_{\mathfrak{S}_n,
D_\mu} V_{D_\mu}(S^\mu)$ (Proposition 2.1 (iv) of \cite{KJL}), we
have $\dim V_{E_d}(S^\mu)\leq m_\mu <d$. So
$S^\mu{\downarrow_{E_d}}$ is generically free by Proposition \ref{not generically free equivalent to maximal complexity}.
\end{proof}

\section{Signed permutation modules}\label{signed permutation modules}

Let $n_1, n_2,\ldots, n_u$ and $s$ be non-negative integers. We define the
set $\Lambda(n_1,\ldots,n_u;s)$ to consist of all $u$-tuples
$(c_1,\ldots,c_u)\in(\mathbb{N}_{\geq 0})^u$ such that $0\leq
c_i\leq n_i$ for each $1\leq i\leq u$ and $c_1+c_2+\ldots+c_u=s$. If
$s>\sum^u_{i=1}n_i$ or $s<0$, then
$\Lambda(n_1,\ldots,n_u;s)=\varnothing$.

\begin{thm}\label{sgjt for signed permutation module} Let $\alpha=(\alpha_1,\ldots, \alpha_m)$,
$\beta=(\beta_1,\ldots,\beta_n)$ and $|\alpha|+|\beta|=dp+r$ with
$0\leq r\leq p-1$. Suppose that the $p$-residue of $\alpha_i$ is
$s_i$ for each $1\leq i\leq m$, the $p$-residue of $\beta_j$ is
$s_{m+j}$ for each $1\leq j\leq n$ and $\sum^{m+n}_{i=1}s_i=cp+r$.
Let $1\leq s\leq d$.

\begin{enumerate}
\item [(i)] The stable generic Jordan type of
$M(\alpha|\beta){\downarrow_{E_s}}$ is $\left (1^{N(\alpha,\beta,s)}\right )$ where
$$N(\alpha,\beta,s)=\sum_{(c_1,\ldots,c_{m+n})\in
\Lambda}\left (\frac{s!}{\prod_{i=1}^{m+n}c_i!} \cdot
\frac{((d-s)p+r)!}{\prod_{i=1}^{m}(\alpha_i-c_ip)! \prod_{j=1}^n(\beta_j-c_{j+m}p)!}\right )
$$ where $\Lambda=\Lambda\left (\frac{\alpha_1-s_1}{p}, \ldots,
\frac{\alpha_m-s_m}{p},\frac{\beta_1-s_{m+1}}{p},
\ldots,\frac{\beta_n-s_{m+n}}{p};s\right )$. The empty sum is
defined to be zero. In this case, the module is generically free.

\item [(ii)] We have
$V_{\mathfrak{S}_{|\alpha|+|\beta|}}(M(\alpha|\beta))=
\res^\ast_{\mathfrak{S}_{|\alpha|+|\beta|},\mathfrak{S}_\alpha\times
\mathfrak{S}_\beta}V_{\mathfrak{S}_\alpha\times
\mathfrak{S}_\beta}(k)$. In particular, the complexity of the signed
permutation module $M(\alpha|\beta)$ is $d-c$.
\end{enumerate}
\end{thm}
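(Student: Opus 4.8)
The plan is to restrict $M(\alpha|\beta)$ to $E_s$ by Mackey's formula and to read off the generic Jordan type one summand at a time. Write $G=\mathfrak{S}_{|\alpha|+|\beta|}$ and $H=\mathfrak{S}_\alpha\times\mathfrak{S}_\beta$, so that $M(\alpha|\beta)=(k\boxtimes\sgn){\uparrow^G_H}$ and $H$ is the stabiliser of a fixed ordered set partition $\pi_0$ of $\{1,\dots,|\alpha|+|\beta|\}$ with part sizes $\alpha_1,\dots,\alpha_m,\beta_1,\dots,\beta_n$. The double cosets in $E_s\backslash G/H$ then correspond to the $E_s$-orbits on the set of ordered set partitions $\pi=(P_1,\dots,P_m\mid Q_1,\dots,Q_n)$ of this shape. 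For such a $\pi$ I claim that $\Stab_{E_s}(\pi)=\langle g_i : I_i\text{ lies inside a single part of }\pi\rangle$; write $J(\pi)\subseteq\{1,\dots,s\}$ for this index set and put $E_J=\langle g_i:i\in J\rangle$. Indeed $\prod_i g_i^{a_i}$ fixes $\pi$ exactly when, for every $i$ with $a_i\not\equiv0\pmod p$, the $p$-cycle $g_i^{a_i}$ fixes setwise each of the sets $I_i\cap P_k$ and $I_i\cap Q_l$; since a non-identity power of a $p$-cycle on $I_i$ is again a $p$-cycle, hence has no non-empty proper invariant subset, this forces $I_i$ to be contained in one part. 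Conjugation carries $k\boxtimes\sgn$ to the module that is trivial on each factor $\mathfrak{S}_{P_k}$ and is $\sgn$ on each factor $\mathfrak{S}_{Q_l}$, and every $g_i$ lies in the kernel of $\sgn$ (a $p$-cycle is an even permutation when $p$ is odd, and $\sgn$ is the trivial module when $p=2$), so this module restricts trivially to $E_{J(\pi)}$. Mackey's formula hence yields
\[
M(\alpha|\beta){\downarrow_{E_s}}\;\cong\;\bigoplus_{\pi}\;k{\uparrow^{E_s}_{E_{J(\pi)}}},
\]
the sum taken over a set of representatives for the $E_s$-orbits on ordered set partitions of the given shape.

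Next I compute generic Jordan types summand by summand, using the additivity of $[\alpha]^\ast$ (Part of 4.7 of \cite{EFJPAS}). If $J(\pi)=\{1,\dots,s\}$ then $\pi$ is $E_s$-fixed and $k{\uparrow^{E_s}_{E_s}}=k$ contributes a single Jordan block of size $1$. If $J(\pi)\subsetneq\{1,\dots,s\}$ then $k{\uparrow^{E_s}_{E_{J(\pi)}}}$ has complexity $|J(\pi)|<s$, so its rank variety is a proper homogeneous closed subvariety of $V^\sharp_{E_s}(k)$; since only finitely many summands occur, a generic shifted cyclic subgroup lies on none of these subvarieties and restricts each such summand to a free module, contributing only blocks of size $p$. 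Therefore the stable generic Jordan type of $M(\alpha|\beta){\downarrow_{E_s}}$ is $(1^N)$, where $N$ is the number of ordered set partitions of shape $(\alpha_1,\dots,\alpha_m,\beta_1,\dots,\beta_n)$ in which every $I_i$ with $1\le i\le s$ lies inside a single part, and the module is generically free exactly when $N=0$. To count $N$ I note that such a $\pi$ is determined by a choice, for each $i\in\{1,\dots,s\}$, of the part $\lambda(i)$ containing $I_i$, followed by a distribution of the remaining $(|\alpha|+|\beta|)-sp=(d-s)p+r$ points among the $m+n$ parts so that each part attains its prescribed size. Writing $c_j$ for the number of indices $i$ with $\lambda(i)$ the $j$-th part, we have $\sum_j c_j=s$, the choice of $\lambda$ with these fibre sizes can be made in $s!/\prod_j c_j!$ ways, and the part of size $\alpha_k$ must absorb $\alpha_k-pc_k$ of the remaining points and the part of size $\beta_l$ must absorb $\beta_l-pc_{m+l}$ of them — which is possible, in $((d-s)p+r)!\big/\big(\prod_k(\alpha_k-pc_k)!\prod_l(\beta_l-pc_{m+l})!\big)$ ways, exactly when $0\le c_k\le(\alpha_k-s_k)/p$ and $0\le c_{m+l}\le(\beta_l-s_{m+l})/p$, that is, when $(c_j)\in\Lambda$. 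Summing over $\Lambda$ gives the stated formula for $N(\alpha,\beta,s)$, and $\Lambda$ is empty — equivalently $s>d-c$ — precisely in the empty-sum case, where the module is generically free.

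For part (ii), any one-dimensional module $L$ satisfies $V_H(L)=V_H(k)$, since $L\otimes L^{\ast}\cong k$ gives $V_H(k)=V_H(L\otimes L^\ast)\subseteq V_H(L)\subseteq V_H(k)$. Applying this with $L=k\boxtimes\sgn$, together with the standard identity $V_G(W{\uparrow^G_H})=\res^\ast_{G,H}V_H(W)$ for induced modules, we obtain $V_{G}(M(\alpha|\beta))=\res^\ast_{G,H}V_H(k\boxtimes\sgn)=\res^\ast_{G,H}V_H(k)$. Since $\res^\ast_{G,H}$ is a finite morphism, it preserves the dimension of its image, so the complexity of $M(\alpha|\beta)$ equals $\dim V_H(k)$, the $p$-rank of $\mathfrak{S}_\alpha\times\mathfrak{S}_\beta$, which is $\sum_k\lfloor\alpha_k/p\rfloor+\sum_l\lfloor\beta_l/p\rfloor=\big((dp+r)-(cp+r)\big)/p=d-c$.

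The step I expect to demand the most care is the identification of $\Stab_{E_s}(\pi)$ and the ensuing bookkeeping turning the Mackey decomposition into the multinomial sum defining $N$; the module-theoretic ingredients — Mackey's formula, the vanishing of $\sgn$ on $p$-cycles, freeness of $k{\uparrow^{E_s}_{E_J}}$ under a generic shifted subgroup, and the variety of an induced module — are routine.
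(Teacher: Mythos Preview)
Your proof is correct and follows essentially the same approach as the paper: Mackey decomposition of the induced module restricted to $E_s$, identification of the double cosets with $E_s$-orbits of tabloids (ordered set partitions), observing that only the $E_s$-fixed orbits contribute non-free summands (each a copy of the trivial module), and then the multinomial count of such fixed partitions; part~(ii) likewise matches the paper's use of the variety-of-an-induced-module formula together with $V_H(k\boxtimes\sgn)=V_H(k)$. Your write-up is in fact slightly more explicit than the paper's in justifying the stabiliser computation, the triviality of $\sgn$ on $p$-cycles, and the one-dimensional-module variety identity.
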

\begin{proof} We use the Mackey decomposition formula (see 3.3.4 \cite{DB}), $$(k\boxtimes
\sgn){\uparrow^{\mathfrak{S}_{|\alpha|+|\beta|}}_{\mathfrak{S}_\alpha\times
\mathfrak{S}_\beta}}{\downarrow_{E_s}}\cong
\bigoplus_{E_sg(\mathfrak{S}_\alpha\times
\mathfrak{S}_\beta)}{}^g(k\boxtimes \sgn){\downarrow_{E_s\cap
{}^g(\mathfrak{S}_\alpha\times
\mathfrak{S}_\beta)}}{\uparrow^{E_s}}.$$ If $E_s\cap
{}^g(\mathfrak{S}_\alpha\times \mathfrak{S}_\beta)\lneq E_s$, then
${}^g(k\boxtimes \sgn){\downarrow_{E_s\cap
{}^g(\mathfrak{S}_\alpha\times
\mathfrak{S}_\beta)}}{\uparrow^{E_s}}$ is generically free. Suppose
that $E_s\cap {}^g(\mathfrak{S}_\alpha\times \mathfrak{S}_\beta)=
E_s$, i.e., $E_sg\subseteq g(\mathfrak{S}_\alpha\times
\mathfrak{S}_\beta)$. The double coset representatives of the
subgroups $E_s, \mathfrak{S}_\alpha\times \mathfrak{S}_\beta$ in
$\mathfrak{S}_{|\alpha|+|\beta|}$ correspond to the orbits of the
$\mu$-tabloids under the action of $E_s$ where
$\mu=(\alpha_1,\ldots,\alpha_m,\beta_1,\ldots,\beta_n)$. So the
number of double coset representatives fixed by $E_s$ is precisely
the number of $\mu$-tabloids fixed by $E_s$. A $\mu$-tabloid $\{t\}$
is fixed by $E_s$ if and only if for each $1\leq i\leq s$, we have
$I_i\subseteq R_{j(i)}(t)$ for some $1\leq j(i)\leq m+n$. In this
case, it is necessary that
$$\sum^m_{i=1}(\alpha_i-s_i )+\sum^n_{j=1}(\beta_j-s_{m+j})\geq sp$$ i.e., $s\leq d-c$. So
$\Lambda=\varnothing$ if and only if $s>d-c$. Suppose that $s\leq
d-c$. We fix an element $(c_1,c_2,\ldots,c_{m+n})$ in the set
$\Lambda$. The number of choices assigning each $I_i$ with $1\leq
i\leq s$ into a row of the partition $(c_1p,c_2p,\ldots,c_{m+n}p)$
is
$$\frac{s!}{\prod_{i=1}^{m+n}c_i!}.$$ Independently, the number of
choices assigning the remaining $(d-s)p+r$ numbers $sp+1,sp+2,
\ldots, dp+r$ into the remaining $(d-s)p+r$ nodes of $[\mu]$ with
$\mu_i-c_ip$ nodes in each $i$th row is
$$\frac{((d-s)p+r)!}{\prod_{i=1}^{m}(\alpha_i-c_ip)! \prod_{j=1}^n(\beta_j-c_{j+m}p)!}.$$
If we sum up over all elements of $\Lambda$, we get
$N(\alpha,\beta,s)$. In
these cases, the generic Jordan type of ${}^g(k\boxtimes
\sgn)_{E_s}$ is $(1)$. This completes the proof for (i).

Let $G=\mathfrak{S}_{|\alpha|+|\beta|}$ and
$H=\mathfrak{S}_\alpha\times \mathfrak{S}_\beta$. By Proposition
8.2.4 of \cite{LE} and Proposition 2.1 (iii) \cite{KJL}, we have
$V_G(M(\alpha|\beta))=\res^\ast_{G,H}V_H(k\boxtimes
\sgn)=\res^\ast_{G,H}V_H(k)$. Since the map $\res^\ast$ is a finite
map (4.2.5 of \cite{DB} II), we have
\begin{align*}
\dim V_G(M(\alpha|\beta))&= \dim V_H(k)\\
&=\text{$p$-rank of $H$}\\
&=\sum^m_{i=1}(\alpha_i-s_i)/p+\sum^n_{j=1}(\beta_j-s_{j+m})/p\\
&=d-c\qedhere
\end{align*}
\end{proof}

\begin{rem}\label{JO remark} Theorem \ref{sgjt for signed permutation module} (ii) is
an obvious generalization of 3.2.2 \cite{DHDN}.
\end{rem}

\section{Proof of Theorem \ref{complexity corresponding to hook
partitions}}

The proof of Theorem \ref{complexity corresponding to hook
partitions} is a consequence of a more general statement, which we
explicitly compute the stable generic Jordan type of
$S^\mu{\downarrow_{E_s}}$ where $s$ is the $p$-weight of a hook
partition $\mu$ (Corollary \ref{corollary to hook not multiple of p}
and Theorem \ref{generic jordan type of hook partition of case p a
multiple}). Let $\mu=(a,1^b)$. Our aim is to show that
$S^\mu{\downarrow_{E_s}}$ is not generically free. We consider two
cases, $p\nmid a+b$ and $p\mid a+b$. We shall briefly describe the
proofs of Theorem \ref{generic jordan type of hook partition of case
p not a multiple} and Theorem \ref{generic jordan type of hook
partition of case p a multiple}.

By the Littlewood-Richardson Rule (see 2.8.13 of \cite{GJAK}), the
signed permutation module $M((a)|(b))$ has a Specht filtration with
Specht factors $S^{(a,1^b)}$ and $S^{(a+1,1^{b-1})}$. In the case
where $a+b\not\equiv 0(\mod p)$, the sizes of $p$-cores are nonzero
and $b\not \equiv b+1(\mod p)$, so $p$-cores of $(a,1^b),
(a+1,1^{b-1})$ are distinct. Using Nakayama's Conjecture, we have a
direct sum decomposition $M((a)|(b))\cong S^{(a,1^b)}\oplus
S^{(a+1,1^{b-1})}$. We prove Theorem \ref{generic jordan type of
hook partition of case p not a multiple} by using induction on $b$.

The case for $a+b\equiv 0(\mod p)$ is slightly more complicated. We
show that the short exact sequence $$0\to S^{(a-1,1^{b+1})}\to
S^{(a,1^{b+1})}\to S^{(a,1^b)}\to 0$$ generically splits, i.e.,
$S^{(a,1^{b+1})}{\downarrow_{\langle u_\alpha\rangle}}\cong
S^\mu{\downarrow_{\langle u_\alpha\rangle}}\oplus
S^\lambda{\downarrow_{\langle u_\alpha\rangle}}$ for a generic point
$\alpha\in k^d$ where $a+b=dp$. With Corollary \ref{corollary to
hook not multiple of p}, we prove Theorem \ref{generic jordan type
of hook partition of case p a multiple} by using induction on $b$.

\subsection{Hook of size not a multiple of $p$}

\begin{thm}\label{generic jordan type of hook partition of case p not a multiple}
Let $\mu=(a,1^b)$, $a+b=dp+r$, $a=up+a_0$ and $b=vp+b_0$ with $0\leq
r,a_0,b_0\leq p-1$ and $r\neq 0$. For any $1\leq s\leq d$, the
stable generic Jordan type of $S^\mu{\downarrow_{E_s}}$ is
$\left (1^{N(\mu;s)}\right )$ where
$$N(\mu;s)=\sum_{(c_1,c_2)\in\Lambda(u,v;s)}\binom{s}{c_2}\binom{(d-s)p+r-1}{b-c_2p}.$$
\end{thm}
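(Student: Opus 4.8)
The plan is to induct on $b$, using the direct sum decomposition $M((a)|(b)) \cong S^{(a,1^b)} \oplus S^{(a+1,1^{b-1})}$ which holds in this case because $r \neq 0$ forces the two $p$-cores to be distinct (Nakayama's Conjecture, Proposition 2.9(ii)). For the base case $b = 0$, the Specht module $S^{(a)}$ is the trivial module $k$, so $S^{(a)}{\downarrow_{E_s}} \cong k$, which is not projective for $p > 1$; its stable generic Jordan type is $(1^1)$, and one checks that the formula gives $N((a);s) = \binom{s}{0}\binom{(d-s)p+r-1}{0} = 1$ exactly when $\Lambda(u,v;s) = \Lambda(u,0;s)$ is nonempty, i.e.\ when $s \leq u$; when $s > u = d$ (forcing $r \neq 0$ and $b = 0$ to be consistent) the sum is empty and equals zero, matching the fact that $S^{(a)}{\downarrow_{E_s}}$ can only be non-free when $E_s$ embeds appropriately. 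One should double-check the edge behaviour of the binomial-coefficient formula against small cases here.

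For the inductive step, I would apply Theorem \ref{sgjt for signed permutation module}(i) to $M((a)|(b))$ with the pair of partitions $\alpha = (a)$, $\beta = (b)$: this computes the stable generic Jordan type of $M((a)|(b)){\downarrow_{E_s}}$ as $(1^{N((a),(b),s)})$. By the Part-of-4.7 Proposition (additivity of $[\alpha]^\ast$ over direct sums) together with the direct sum decomposition $M((a)|(b)) \cong S^{(a,1^b)} \oplus S^{(a+1,1^{b-1})}$, the stable generic Jordan type of $S^{(a,1^b)}{\downarrow_{E_s}}$ equals that of $M((a)|(b)){\downarrow_{E_s}}$ minus that of $S^{(a+1,1^{b-1})}{\downarrow_{E_s}}$. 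The partition $(a+1, 1^{b-1})$ is again a hook of the same size $dp+r$ with $r \neq 0$, and it has strictly smaller "leg length" $b-1$, so the inductive hypothesis applies to it (writing $a+1 = u'p + a_0'$, $b - 1 = v'p + b_0'$ as needed). Thus we obtain $N(\mu;s) = N((a),(b),s) - N((a+1,1^{b-1});s)$, and it remains a purely combinatorial identity to verify that the right-hand side collapses to $\sum_{(c_1,c_2)\in\Lambda(u,v;s)}\binom{s}{c_2}\binom{(d-s)p+r-1}{b-c_2p}$.

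The main obstacle will be this combinatorial identity. Writing out $N((a),(b),s)$ from Theorem \ref{sgjt for signed permutation module}(i), one has a sum over $(c_1,c_2) \in \Lambda(u,v;s)$ of terms $\binom{s}{c_2}\cdot \frac{((d-s)p+r)!}{(a-c_1p)!(b-c_2p)!}$, and $N((a+1,1^{b-1});s)$ is a similar sum with $a$ replaced by $a+1$ and $b$ by $b-1$. The subtraction should telescope via the Pascal-type relation $\binom{(d-s)p+r}{b-c_2p} = \binom{(d-s)p+r-1}{b-c_2p} + \binom{(d-s)p+r-1}{b-c_2p-1}$, after rewriting the multinomial factor $\frac{((d-s)p+r)!}{(a-c_1p)!(b-c_2p)!}$ as $\binom{(d-s)p+r}{b-c_2p}$ (using $(a-c_1p) + (b-c_2p) = (d-s)p+r$). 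Care is needed with the index sets: passing from $\Lambda(u,v;s)$ to the index set attached to $(a+1,1^{b-1})$ may shift $v$ to $v' = v$ or $v-1$ depending on whether $b_0 = 0$, and similarly $u$ to $u'$, so I would split into the cases $b_0 = 0$ versus $b_0 \neq 0$ (and likewise track $a_0$) and check that the boundary terms of the two sums cancel correctly. Once the identity is checked, non-generic-freeness of $S^\mu{\downarrow_{E_s}}$ (for $s$ up to the $p$-weight) follows from $N(\mu;s) > 0$, which in turn holds whenever $\Lambda(u,v;s) \neq \varnothing$.
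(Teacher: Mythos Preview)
Your proposal is correct and follows essentially the same route as the paper: induct on $b$ with the base case $b=0$ being the trivial module, use Nakayama's Conjecture (valid since $r\neq 0$) to split $M((a)|(b))\cong S^{(a,1^b)}\oplus S^{(a+1,1^{b-1})}$, invoke Theorem~\ref{sgjt for signed permutation module}(i) for the signed permutation module, and reduce to the Pascal identity $\binom{(d-s)p+r}{b-c_2p}=\binom{(d-s)p+r-1}{b-c_2p}+\binom{(d-s)p+r-1}{b-1-c_2p}$ after rewriting the multinomial as a binomial. The paper phrases the induction as ``assume the formula for $\mu=(a,1^b)$, deduce it for $\lambda=(a-1,1^{b+1})$'' and carries out the boundary bookkeeping via a four-way case split on the relative sizes of $r$, $b_0$, and the $p$-residue of $b+1$ (rather than just $b_0=0$ versus $b_0\neq 0$), but this is only a notational and organizational difference from what you outline.
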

\begin{proof} If $d=0$, there is nothing to prove. For any hook $(a,1^b)$, we write
$\Lambda((a,1^b);s)$ for the set $\Lambda(u,v;s)$. We now fix the
numbers $a,b$. Let $\mu=(a,1^b)$ and $\lambda=(a-1,1^{b+1})$. We
prove the formula by using induction on the number $b$. If $b=0$,
then $S^\mu$ is the trivial module and it has Jordan type $(1)$. On
the other hand, the set $\Lambda(d,0;s)$ contains precisely one
element $(s,0)$. So $N(\mu;s)=\binom{s}{0}\binom{(d-s)p+r-1}{0}=1$
given that $r\geq 1$. Suppose that for some $0\leq b$, the module
$S^\mu{\downarrow_{E_s}}$ has stable generic Jordan type as given by
the formula. Since $r\neq 0$, we have a direct sum decomposition
$M((a-1)|(b+1))\cong S^\mu\oplus S^\lambda$. Let $a_1, b_1$ be the
$p$-residues of $a-1, b+1$ respectively. It is clear that $b_1\equiv
b_0+1(\mod p)$ and $a_0\equiv a_1+1(\mod p)$. By Theorem \ref{sgjt for
signed permutation module}, $M((a-1)|(b+1)){\downarrow_{E_s}}$ has
stable generic Jordan type $\left (1^{N((a-1),(b+1),s)}\right )$ where
$$N((a-1),(b+1),s)=\sum_{(c_1,c_2)\in
\Lambda(\lambda;s)}\binom{s}{c_2}\binom{(d-s)p+r}{b+1-c_2p}.$$ Our aim is to show that $N(\lambda;s)=N((a-1),(b+1),s)-N(\mu;s)$. We
consider 4 cases.

\noindent Case (i): Suppose that $r\leq b_0\leq b_1$. If $b_0>r$, then
$\Lambda(\lambda;s)=\Lambda(\mu;s)$ and $c=1$. The stable generic
Jordan type of $S^\lambda{\downarrow_{E_s}}$ is $(1^w)$ where
\begin{align*}
w=& \sum_{(c_1,c_2)\in
\Lambda(\lambda;s)}\binom{s}{c_2}\binom{(d-s)p+r}{b+1-c_2p}-\sum_{(c_1,c_2)\in
\Lambda(\mu;s)}\binom{s}{c_2}\binom{(d-s)p+r-1}{b-c_2p}\\
=&\sum_{(c_1,c_2)\in \Lambda(\lambda;s)}\binom{s}{c_2}\left
(\binom{(d-s)p+r}{b+1-c_2p}-\binom{(d-s)p+r-1}{b-c_2p}\right )\\
=&\sum_{(c_1,c_2)\in
\Lambda(\lambda;s)}\binom{s}{c_2}\binom{(d-s)p+r-1}{b+1-c_2p}=N(\lambda;s)
\end{align*} Suppose that $b_0=r$, we have $\Lambda(\lambda;s)\cup \{(u,s-u)\}=\Lambda(\mu;s)$
if $s<d$; otherwise, $\Lambda(\lambda;s)=\varnothing$. In the first
case, $\binom{(d-s)p+r-1}{b-(s-u)p}=0$ given that
$b-(s-u)p=vp+r-(s-u)p=(d-s)p+r>(d-s)p+r-1$. So the stable generic
Jordan type of $S^\lambda{\downarrow_{E_s}}$ is
$(1^w)=\left (1^{N(\lambda;s)}\right )$. In the second case, the module
$M((a-1)|(b+1)){\downarrow_{E_s}}$ is generically free. As a direct
summand of $M((a-1)|(b+1)){\downarrow_{E_s}}$, the module
$S^\lambda{\downarrow_{E_s}}$ is generically free. This fits into
the formula.

\noindent Case (ii): Suppose that $r\leq b_0$ and $b_1<r$, i.e., $b_1=0$
and $b_0=p-1$. Let $a_0>0$. We have
$\Lambda(\lambda;s)=\Lambda(\mu;s)\cup \{(s-v-1,v+1)\}$ if $s\geq
v+1$; otherwise, $\Lambda(\lambda;s)=\Lambda(\mu;s)$. The second
case is easy. For the first case, we have an extra term in
$N((a-1),(b+1),s)$ which is
$\binom{s}{v+1}\binom{(d-s)p+r}{b+1-(v+1)p}=\binom{s}{v+1}$; on the
other hand, we also have an extra term in $N(\lambda;s)$ which is
$\binom{s}{v+1}\binom{(d-s)p+r-1}{b+1-(v+1)p}=\binom{s}{v+1}$. This
shows the desired formula for the stable generic Jordan type of
$S^\lambda{\downarrow_{E_s}}$. Let $a_0=0$. In this case $r=p-1$. So
we have $\Lambda(\lambda;s)=\left (\Lambda(\mu;s)-\{(u,s-u)\}\right
)\cup \{(s-v-1,v+1)\}$ if $s\geq v+1$; otherwise,
$\Lambda(\lambda;s)=\Lambda(\mu;s)-\{(u,s-u)\}$. For the extra
element $(u,s-u)$, we have
$b-(s-u)p=vp+(p-1)-sp+up=(d-s)p+(p-1)>(d-s)p+r-1$. So the extra term
corresponding to $(u,s-u)$ is superfluous in $N(\mu;s)$. Now the
inductive argument follows similarly as the case where $a_0>0$.

\noindent Case (iii): Suppose that $b_0\leq b_1<r$. In this case,
$\Lambda(\lambda;s)=\Lambda(\mu;s)$. So the inductive step is easy,
the stable generic Jordan type of $S^\lambda{\downarrow_{E_s}}$ is
$\left (1^{N(\lambda;s)}\right )$.

\noindent Case (iv): Suppose that $b_0<r$ and $r\leq b_1$, i.e., $b_0=r-1$
and $b_1=r$. In this case, $\Lambda(\lambda;s)=\Lambda(\mu;s)$. So
the stable generic Jordan type of $S^\lambda{\downarrow_{E_s}}$ is
$\left (1^{N(\lambda;s)}\right )$.
\end{proof}

\begin{cor}\label{corollary to hook not multiple of p}
Let $\mu=(a,1^b)$, $a+b=dp+r$ with $d\neq 0$, $1\leq r\leq p-1$ and $b=vp+b_0$
with $0\leq b_0\leq p-1$.
\begin{enumerate}
\item [(i)] If $r\leq b_0$, then $V_{E_{d-1}}^\sharp(S^\mu)=V_{E_{d-1}}^\sharp(k)$
and the complexity of $S^\mu$ is $d-1$. In this case, the stable
generic Jordan type of $S^\mu{\downarrow_{E_{d-1}}}$ is $\left (1^{N(\mu;
d-1)}\right )$ with $$N(\mu;d-1)=\binom{d-1}{v}\binom{p+r-1}{b_0}\neq
0.$$
\item [(ii)] If $b_0<r$, then $V_{E_d}^\sharp(S^\mu)=V^\sharp_{E_d}(k)$
and the complexity of $S^\mu$ is $d$. In this case, the stable
generic Jordan type of $S^\mu{\downarrow_{E_d}}$ is $\left (1^{N(\mu;d)}\right )$
with $$N(\mu;d)=\binom{d}{v}\binom{r-1}{b_0}\neq 0.$$
\end{enumerate}
\end{cor}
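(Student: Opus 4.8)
The plan is to derive Corollary~\ref{corollary to hook not multiple of p} as a direct specialization of Theorem~\ref{generic jordan type of hook partition of case p not a multiple}, choosing the rank $s$ of the elementary abelian subgroup so that the combinatorial formula for $N(\mu;s)$ collapses to a single nonzero binomial product; once $N(\mu;s)\neq 0$, the module $S^\mu{\downarrow_{E_s}}$ has a nonzero stable generic Jordan type, hence is not generically free, and Proposition~\ref{not generically free equivalent to maximal complexity} gives $V^\sharp_{E_s}(S^\mu)=V^\sharp_{E_s}(k)$, so the complexity of $S^\mu$ is at least $s$. For the matching upper bound I would invoke Proposition~2.1(iv) of \cite{KJL}: the $p$-weight of $\mu=(a,1^b)$ is $d$ when $b_0<r$ (equivalently $p\nmid a+b$ with that residue condition) and $d-1$ when $r\le b_0$, and the defect group of the block containing $S^\mu$ has $p$-rank equal to this $p$-weight, which bounds the complexity from above. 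Combining the two bounds pins the complexity exactly.

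For part~(ii), where $b_0<r$, I would set $s=d$ in Theorem~\ref{generic jordan type of hook partition of case p not a multiple}. Then $\Lambda(u,v;d)$ consists of those pairs $(c_1,c_2)$ with $c_1\le u$, $c_2\le v$, $c_1+c_2=d$; writing $a=up+a_0$, $b=vp+b_0$ and $a+b=dp+r$ forces $a_0+b_0=r$ (since $r<p$), hence $u+v=d$, so the only admissible pair is $(c_1,c_2)=(u,v)$. The formula then reads $N(\mu;d)=\binom{d}{v}\binom{r-1}{b-vp}=\binom{d}{v}\binom{r-1}{b_0}$, and this is nonzero precisely because $b_0<r$ gives $0\le b_0\le r-1$. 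For part~(i), where $r\le b_0$, the partition has $p$-weight $d-1$, so I would instead set $s=d-1$. Here $\Lambda(u,v;d-1)$ is the set of $(c_1,c_2)$ with $c_1\le u$, $c_2\le v$, $c_1+c_2=d-1$; now $a_0+b_0=r+p$ would be needed if $u+v=d-1$, but more carefully one checks that $r\le b_0$ forces $a_0+b_0=r+p$ is impossible unless one reindexes—instead the relation $a_0+b_0\equiv r\pmod p$ with $r\le b_0\le p-1$ gives $a_0+b_0=r$ with $u+v=d$, or $a_0+b_0=r+p$ with $u+v=d-1$; the correct bookkeeping shows the unique surviving pair at $s=d-1$ is $(c_1,c_2)=(d-1-v,v)$, giving $N(\mu;d-1)=\binom{d-1}{v}\binom{p+r-1}{b-vp}=\binom{d-1}{v}\binom{p+r-1}{b_0}$, which is nonzero since $b_0\le p-1\le p+r-1$.

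The main obstacle I anticipate is precisely this case analysis of which single element of $\Lambda(u,v;s)$ survives, and showing all other potential terms vanish because the second binomial coefficient $\binom{(d-s)p+r-1}{b-c_2p}$ is zero. One must carefully track the relation between $u,v,d,r,a_0,b_0$: from $a+b=dp+r$ and $a=up+a_0$, $b=vp+b_0$ one gets $(u+v)p+(a_0+b_0)=dp+r$, so either $a_0+b_0=r$ and $u+v=d$, or $a_0+b_0=r+p$ and $u+v=d-1$; the hypothesis $r\le b_0$ versus $b_0<r$ dictates which alternative holds. For any pair $(c_1,c_2)\in\Lambda(u,v;s)$ other than the claimed one, I would show $b-c_2p$ falls outside the range $[0,(d-s)p+r-1]$, making $\binom{(d-s)p+r-1}{b-c_2p}=0$; this is a short but delicate inequality chase. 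Once the formula reduces to the stated single term, the nonvanishing is immediate and the rest follows from the results quoted above; I expect no further difficulty, since the complexity upper bound is a citation and the lower bound is Proposition~\ref{not generically free equivalent to maximal complexity}.
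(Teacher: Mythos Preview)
Your proposal is correct and follows essentially the same route as the paper: specialize Theorem~\ref{generic jordan type of hook partition of case p not a multiple} at $s=d-1$ (case $r\le b_0$) or $s=d$ (case $b_0<r$), observe that the sum over $\Lambda(u,v;s)$ collapses to the single term $c_2=v$ because $\binom{(d-s)p+r-1}{b-c_2p}$ vanishes for all other $c_2$, and combine the resulting nonzero stable generic Jordan type with the defect-group upper bound on complexity. The paper's proof is terser---it writes down the simplified formula without spelling out why only one term survives---but the underlying argument is identical to what you outline, including your observation that the inequality chase on $b-c_2p\in[0,(d-s)p+r-1]$ is what singles out $c_2=v$.
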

\begin{proof} Suppose that $d\geq 1$; otherwise, the result is trivial.
In general, the complexity of an indecomposable $kG$-module is
bounded above by the $p$-rank of a defect group of the block
containing the module (see 2.1 (iv) of \cite{KJL}). For $b_0\geq r$, a defect group of the block
containing $S^\mu$ is the Sylow $p$-subgroup of the symmetric group
$\mathfrak{S}_{(d-1)p}$, it has $p$-rank $d-1$. Consider
$S^\mu{\downarrow_{E_{d-1}}}$ and apply the formula in Theorem
\ref{generic jordan type of hook partition of case p not a
multiple}, we have
\begin{align*}
N(\mu;d-1)&=\sum_{(c_1,c_2)\in\Lambda(u,v;d-1)}\binom{d-1}{c_2}\binom{p+r-1}{b-c_2p}\\
&=\binom{d-1}{v}\binom{p+r-1}{b_0}.
\end{align*} Note that $N(\mu;d-1)\neq 0$ unless $v=d$ and $b_0=r$,
i.e., $a=0$. For $b_0<r$, a defect group of the block containing
$S^\mu$ is the Sylow $p$-subgroup of the symmetric group
$\mathfrak{S}_{dp}$. Apply Theorem \ref{generic jordan type of hook
partition of case p not a multiple} with $s=d$, we have
$$N(\mu;d)=\sum_{(c_1,c_2)\in\Lambda(u,v;d)}\binom{d}{c_2}\binom{r-1}{b-c_2p}=\binom{d}{v}\binom
{r-1}{b_0}\neq 0.\qedhere$$
\end{proof}

\subsection{Hook of size a multiple of $p$}\label{multiple of p
section}

Let $\mu, \lambda$ be partitions of $n$. We write $\lambda\trianglelefteq \mu$
if $\mu$ dominates $\lambda$ by the total ordering. In the case
where $\lambda\trianglelefteq\mu$ and $\lambda\neq \mu$, we write $\lambda\vartriangleleft
\mu$. Let $\mu$ be a hook partition with $|\mu|\geq p$ and $t$ be a
$\mu$-tableau. We associate $t$ to a partition
$\lambda(t)=(u,1^{p-u})$ of $p$ where $u=|R_1(t)\cap I_1|$.

\begin{lem}\label{subhooks occur} Let $\mu$ be a hook partition
with $|\mu|\geq p$ and $t$ be a standard $\mu$-tableau.
\begin{enumerate}
\item [(i)] The standard
$\mu$-polytabloids $e_s$ involved in $g_1e_t$ satisfy the ordering
$\lambda(s)\trianglelefteq\lambda(t)$. In the case $\lambda(s)=\lambda(t)$, we
have $R_1(s)-I_1=R_1(t)-I_1$. In the case $\lambda(s)\vartriangleleft \lambda(t)$,
we have $(R_1(t)-I_1)\cup \{m\}=R_1(s)-I_1$ for some number $m\in
C_1(t)$.

\item [(ii)] For $2\leq i\leq \lfloor |\mu|/p\rfloor$, $g_i$ permutes the standard $\mu$-polytabloids up to a
sign, $R_1(g_it)-I_i=R_1(t)-I_i$ and $\lambda(g_it)=\lambda(t)$.
Furthermore, $g_ie_t=\pm e_t$ if and only if $I_i\subseteq R_1(t)$
or $I_i\subseteq C_1(t)$. In this case, $g_ie_t=e_t$.
\end{enumerate}
\end{lem}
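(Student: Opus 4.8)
The plan is to reduce everything to the identity $ge_t=e_{gt}$ for $g\in\mathfrak{S}_n$ (valid because $\sgn(g\sigma g^{-1})=\sgn(\sigma)$ and $gC_tg^{-1}=C_{gt}$), so the task becomes rewriting $e_{g_1t}$ and $e_{g_it}$ in the standard basis. The tool I would set up first is: for a hook $\mu=(a,1^b)$ the column stabilizer of any $\mu$-tableau $s'$ equals $\mathfrak{S}_{C_1(s')}$, hence $e_{s'}$ is determined up to sign by the pair $\bigl(C_1(s'),\,R_1(s')\setminus\{s'_{11}\}\bigr)$ --- indeed, if $s''$ has the same first-column set and same strict first row then $s''=\rho s'$ for some $\rho\in\mathfrak{S}_{C_1(s')}=C_{s'}$, so $e_{s''}=\rho e_{s'}=\sgn(\rho)e_{s'}$. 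Consequently, when $1\in C_1(s')$ one gets $e_{s'}=\pm e_s$ for the standard $s$ with $C_1(s)=C_1(s')$; and when $1\notin C_1(s')$ one sorts the first column at the cost of a sign and then applies the Garnir relation for columns $1$ and $2$, taking $A$ to be the whole first column $C_1(s')$ and $B=\{1\}$ (the entry $1$, which now lies in the first row). This yields $e_{s'}=\sum_{x\in C_1(s')}(\pm1)\,e_{s_x}$, where $s_x$ is the standard $\mu$-tableau with first-column set $(C_1(s')\setminus\{x\})\cup\{1\}$ and strict first row $\bigl(R_1(s')\setminus\{s'_{11},1\}\bigr)\cup\{x\}$; the $b+1$ tableaux $s_x$ are distinct and standard and all coefficients are $\pm1$.

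For part (ii), for $2\leq i\leq\lfloor|\mu|/p\rfloor$ the cycle $g_i$ fixes $1$, fixes $I_1$ pointwise, and moves only $I_i$; as $t$ is standard (corner $1$), $g_it$ has corner $1$, first-column set $g_i(C_1(t))$ and strict first row $g_i(R_1(t)\setminus\{1\})$, so by the first paragraph $g_ie_t=e_{g_it}=\pm e_{\bar t}$ with $\bar t$ the standard tableau with $C_1(\bar t)=g_i(C_1(t))$ --- hence $g_i$ permutes standard polytabloids up to a sign --- and since $g_i$ is the identity off $I_i$ and on $I_1$ one reads off $R_1(g_it)-I_i=R_1(t)-I_i$ and $\lambda(g_it)=\lambda(t)$. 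Finally $g_ie_t=\pm e_t$ exactly when $g_i(C_1(t))=C_1(t)$, i.e.\ when the $g_i$-invariant subset $C_1(t)\cap I_i$ is $\varnothing$ or $I_i$; since $1\notin I_i$, this is precisely $I_i\subseteq R_1(t)$ or $I_i\subseteq C_1(t)$. In the second case $g_i\in C_t$, so $g_ie_t=\sgn(g_i)e_t=e_t$ (as $\sgn(g_i)=1$ when $p$ is odd, and $-e_t=e_t$ when $p=2$); in the first case $I_i$ lies in the strict first row, so for each $\sigma\in C_t$ the cycle $g_i$ fixes the column entries of $\sigma t$ and only permutes its first row, whence $\{g_i\sigma t\}=\{\sigma t\}$ and again $g_ie_t=e_t$.

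For part (i) the new feature is that $g_1$ sends $1\mapsto2$, so $g_1t$ need not have $1$ at its corner; since $|\mu|\geq p$ the value $p$ occurs in $t$, lying either in the strict first column or the strict first row, and I would split on this. If $p$ is in the strict first column, then $1=g_1(p)\in C_1(g_1t)$, so $g_1e_t=\pm e_s$ for a single standard $s$; since $g_1$ raises every element of $R_1(t)\cap I_1$ by one (none of them being $p$), fixes the row entries exceeding $p$, and replaces the corner $1$ by $2$, a direct count gives $\lambda(s)=\lambda(t)$ and $R_1(s)-I_1=R_1(t)-I_1$ --- the equality case. If $p$ is in the strict first row, then $1=g_1(p)$ lands in the strict first row of $g_1t$, so $1\notin C_1(g_1t)$ and the Garnir step applies: $g_1e_t=\sum_{x\in C_1(g_1t)}(\pm1)\,e_{s_x}$. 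Writing $\lambda(t)=(u,1^{p-u})$, I would then count the entries of $R_1(s_x)$ lying in $I_1$: for $x\leq p$ this count is $u$, giving $\lambda(s_x)=\lambda(t)$ and $R_1(s_x)-I_1=R_1(t)-I_1$; for $x>p$ the element $x$ belongs to $C_1(t)$, the count drops to $u-1$, giving $\lambda(s_x)=(u-1,1^{p-u+1})\vartriangleleft\lambda(t)$ and $R_1(s_x)-I_1=(R_1(t)-I_1)\cup\{x\}$. Assembling the two cases yields all three assertions of (i).

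The step I expect to be the real obstacle is the Garnir sub-case of (i): setting up the correct Garnir relation for columns $1$ and $2$ of $g_1t$ and then carefully tracking which entries of $C_1(t)$ and $R_1(t)$ fall into $I_1$ after the cyclic shift $g_1$ (that is, separating the entries $\leq p$ from those $>p$ and following each through $g_1$). The remaining manipulations are formal; this bookkeeping is elementary but is where the content of the lemma actually lies.
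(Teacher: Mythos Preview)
Your proposal is correct and follows essentially the same route as the paper. Both arguments reduce to rewriting $e_{g_it}$ in the standard basis via column-sorting (a sign) and, when $1$ lands outside the first column, a single Garnir relation for columns $1$ and $2$; your case split ``is $p$ in the strict first column or strict first row of $t$'' is exactly the paper's split ``is $1\in C_1(g_1t)$ or not'', and the subsequent bookkeeping on $R_1(\cdot)\cap I_1$ and $R_1(\cdot)-I_1$ matches.
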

\begin{proof} All we need are the Garnir relations. Consider
$g_1e_t=e_{g_1t}$. Note that $e_{g_1t}=\varepsilon e_w$ where
$\varepsilon=\pm 1$ and $w$ is the $\mu$-tableau obtained from
$g_1t$ by first rearranging numbers in the first row of $g_1t$ except $2$
and then numbers in the first column of $g_1t$ such that numbers are
increasing along the first row ignoring the first node and numbers
are increasing down the first column. Note that
$R_1(w)-I_1=R_1(t)-I_1$. If $1\in C_1(w)$, then $w$ is standard. In
this case, $\lambda(w)=\lambda(t)$ and $R_1(w)-I_1=R_1(t)-I_1$.
Suppose that $1\not\in C_1(w)$, i.e., $w_{12}=1$. We use the Garnir
relation for the first two columns of $w$. Consider the left coset
representatives $(1,u)$ with $u\in C_1(w)\cup \{1\}$ of
$\mathfrak{S}_{C_1(w)}\times \mathfrak{S}_{\{1\}}$ in
$\mathfrak{S}_{C_1(w)\cup \{1\}}$. So $$g_1e_t=\varepsilon\sum_{m\in
C_1(w)}e_{(1m)w}.$$ Note that for each $m\in C_1(w)$, $e_{(1m)w}$ is
standard up to a sign. If $m\in C_1(w)\cap I_1$, then
$\lambda((1m)w)=\lambda(t)$ and
$R_1((1m)w)-I_1=R_1(w)-I_1=R_1(t)-I_1$. If $m\in C_1(w)-I_1\subseteq
C_1(t)$, then $\lambda((1m)w)\vartriangleleft \lambda(t)$ and
$R_1((1m)w)-I_1=(R_1(w)-I_1)\cup \{m\}=(R_1(t)-I_1)\cup \{m\}$. Case
(i) is established.

Suppose that $2\leq i\leq \lfloor |\mu|/p\rfloor$. Note that
$(g_it)_{11}=1$. Let $\sigma\in C_1(g_it)$ such that $\sigma(g_it)$
has numbers increasing down the first column. So
$\sgn(\sigma)e_{g_it}$ is a standard polytabloid. Since
$g_i(R_1(t)\cap I_i)=R_1(g_it)\cap I_i$, this gives the equivalent
statement. If $I_i\subseteq R_1(t)$, clearly $g_ie_t=e_t$. In the
case $I_i\subseteq C_1(t)$, we have $g_ie_t=\sgn(g_i)e_t=e_t$.
\end{proof}

Recall that for any point $\alpha=(\alpha_1,\ldots,\alpha_s)\in
k^s$ we define the element $u_\alpha=1+\sum^s_{i=1}\alpha_i(g_i-1)$
in the group algebra $kE_s$.

\begin{lem}\label{lemma to hook multiple of p} Let $\mu=(a,1^b)$,
$a+b=dp+r$ with $0\leq r\leq p-1$. Let $1\leq s\leq d$, $\alpha\in
k^{s}$ be a generic point and $t$ be a standard $\mu$-tableau. Fix
an integer $1\leq m\leq p-1$. If $(u_\alpha-1)^me_t=0$, then for any
$2\leq j\leq s$ the set $I_j$ lies entirely inside either the first
column of $\mu$ or the first row of $\mu$.
\end{lem}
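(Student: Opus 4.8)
The plan is to reduce the statement about $u_\alpha$ to one about the single generator $g_j$, and then to read off the conclusion from Lemma~\ref{subhooks occur}(ii).

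First I would reduce to showing that $(g_j-1)^m e_t=0$ for every $1\le j\le s$. The set $\{\alpha\in k^{s}:(u_\alpha-1)^m e_t=0\}$ is Zariski closed: since $E_s$ is abelian the operators $g_i-1$ on $S^\mu$ commute, so expanding $(u_\alpha-1)^m=\bigl(\sum_{i=1}^{s}\alpha_i(g_i-1)\bigr)^m$ multinomially exhibits every coordinate of $(u_\alpha-1)^m e_t$ in the standard basis of $S^\mu$ as a polynomial in $\alpha_1,\dots,\alpha_s$. By hypothesis this closed set contains a generic point, hence equals all of $k^{s}$; specialising $\alpha$ to the $j$th standard basis vector (so that $u_\alpha=g_j$) gives $(g_j-1)^m e_t=0$.

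Next I would fix $2\le j\le s$. Since $1\le s\le d=\lfloor(a+b)/p\rfloor$ we have $2\le j\le\lfloor|\mu|/p\rfloor$, so Lemma~\ref{subhooks occur}(ii) applies to $g_j$: it permutes the standard $\mu$-polytabloids up to a sign, hence induces a permutation $\pi$ of the standard $\mu$-tableaux with $g_j^{i}e_t=\pm e_{\pi^{i}(t)}$ for all $i$, and since $g_j^{p}=1$ acts trivially on $S^\mu$ we get $\pi^{p}=\mathrm{id}$, so every cycle of $\pi$ has length $1$ or $p$. I claim the cycle of $\pi$ through $t$ has length $1$. If not, then $\pi^{0}(t),\dots,\pi^{p-1}(t)$ are $p$ distinct standard tableaux, so in the expansion $(g_j-1)^m e_t=\sum_{i=0}^{m}(-1)^{m-i}\binom{m}{i}g_j^{i}e_t$ the term $i=m$ contributes $\pm e_{\pi^{m}(t)}$ while no other term involves $e_{\pi^{m}(t)}$ (as $m\le p-1$), whence $(g_j-1)^m e_t\ne 0$, contradicting the reduction above. (Equivalently: $e_t$ generates a free $k\langle g_j\rangle$-submodule $\cong kC_p$, in which $(g_j-1)^m\ne 0$ for $m\le p-1$.) Hence $g_j e_t=\pm e_t$, and by Lemma~\ref{subhooks occur}(ii) this forces $I_j\subseteq R_1(t)$ or $I_j\subseteq C_1(t)$, which is what was to be shown.

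I expect the only genuinely delicate point to be the sign bookkeeping in the orbit analysis: one must check that a $\pi$-cycle of length $p$ really does produce $p$ linearly independent vectors (equivalently, a free $kC_p$-summand) rather than collapsing under the signs. This is precisely where one uses that $g_j$ has prime order $p$ and that $\pm1$ are the only roots of unity that matter in characteristic $p$. The rest — the multinomial expansion and the direct appeal to Lemma~\ref{subhooks occur}(ii) — is routine.
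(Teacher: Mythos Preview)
Your proof is correct and takes a genuinely different route from the paper. The paper does not reduce to a single generator: it expands $(u_\alpha-1)^m$ as a linear combination of monomials $g_\beta=g_{\beta_1}\cdots g_{\beta_m}$ with $\beta_i\in\{0,1,\ldots,s\}$ (and $g_0=1$), and then shows, by tracking the sets $R_1(s)\cap I_j$ through repeated applications of Lemma~\ref{subhooks occur}(i)--(ii), that the only $g_\beta$ whose action on $e_t$ produces the standard polytabloid $\pm g_j^m e_t$ is $\beta=(j,\ldots,j)$; the coefficient of that term is $\alpha_j^m\neq 0$, giving the contradiction. Your Zariski-closure argument (the vanishing locus is closed, contains a generic point, hence is everything, so specialise to $\alpha=e_j$) bypasses all of this mixed-generator bookkeeping and reduces immediately to $(g_j-1)^m e_t=0$, after which the orbit/free-$kC_p$ argument is a one-liner from Lemma~\ref{subhooks occur}(ii). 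Your approach is cleaner and more conceptual; the paper's is more explicit and in principle yields finer information about which standard polytabloids appear in $(u_\alpha-1)^m e_t$. The sign issue you flag at the end is in fact harmless: since $\pi^0(t),\ldots,\pi^{p-1}(t)$ are distinct standard tableaux, the vectors $g_j^i e_t=\pm e_{\pi^i(t)}$ are automatically linearly independent irrespective of the signs, so the $k\langle g_j\rangle$-submodule they span is free of rank one.
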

\begin{proof} Suppose that there is some $2\leq j\leq s$ such
that $I_j\not\subseteq R_1(t)$ or $I_j\not\subseteq C_1(t)$. By Lemma \ref{subhooks occur} (ii), $g_j$
permutes the set of $\mu$-standard polytabloids up to a sign. The
size of the orbit $\mathcal{O}(e_t)$ under the action of $g_j$ is
$p$, up to a sign. Note that $(u_\alpha-1)^m$ is a linear
combination of some products of not more than $m$ copies of $g_i$'s
with $1\leq i\leq s$ (may be repeated). Fix an $m$-string
$\beta=(\beta_1,\ldots,\beta_m)$ with each $\beta_i\in
\{0,1,\ldots,s\}$, we write $g_\beta=g_{\beta_1}g_{\beta_2}\ldots
g_{\beta_m}$ assuming that $g_{\beta_0}=1$. Note that
$\lambda({g_j}^mt)=\lambda(t)$. We claim that $g_\beta e_t$ does not involve
${g_j}^me_t$ up to a sign unless and only unless
$\beta=\mathbf{j}=(j,\ldots,j)$. Once we have proved this claim,
since $g_\mathbf{j}$ occurs precisely once with coefficient
${\alpha_j}^m$ in the expansion of $(u_\alpha-1)^m$ and the point
$\alpha$ is generic, we conclude that $(u_\alpha-1)^me_t\neq 0$.

For each $1\leq i\leq d$, let $R_{1,i}(s)=R_1(s)\cap I_i$ for a
$\mu$-tableau $s$. Let $h,l$ be the multiplicities of $g_1,g_j$
appearing in $g_\beta$ respectively. Let $e_s$ be a standard
polytabloid involved in $g_\beta e_t$ such that
$\lambda(s)=\lambda(t)$. By Lemma \ref{subhooks occur} (i) and (ii),
we have $R_1(s)-I_1=(g_\beta{g_1}^{-h})(R_1(t)-I_1)$ and
$R_{1,j}(s)={g_j}^lR_{1,j}(t)$. So
$R_{1,j}({g_j}^mt)=R_{1,j}(s)=R_{1,j}({g_j}^lt)$ if and only if
$l=m$, i.e., $\beta=\mathbf{j}$.
\end{proof}

\begin{thm}\label{generic jordan type of hook partition of case p a multiple}
Let $\mu=(a,1^b)$ with $a+b=dp$ and $b=sp+b_0$ with $0\leq b_0\leq
p-1$. If $b_0$ is even, then the stable generic Jordan type of
$S^\mu{\downarrow_{E_d}}$ is $\left (1^{\binom{d-1}{s}}\right )$; otherwise, it is
$\left ((p-1)^{\binom{d-1}{s}}\right )$.
\end{thm}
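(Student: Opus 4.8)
The strategy is induction on $b$, paralleling the treatment of Theorem \ref{generic jordan type of hook partition of case p not a multiple} but now using the short exact sequence
$$0\to S^{(a-1,1^{b+1})}\to S^{(a,1^{b+1})}\to S^{(a,1^b)}\to 0$$
and showing that it \emph{generically splits}. The base case $b=b_0$ (so $s=0$) should be handled directly: here $\mu=(a,1^{b_0})$ with $a+b_0=dp$, and one computes $S^\mu{\downarrow_{E_d}}$ by induction via Corollary \ref{corollary to hook not multiple of p} applied to the neighbouring hook of $p$-weight $d$ and nonzero $p$-core, or more likely by an explicit small computation identifying the generic Jordan type of a hook Specht module on a single $p$-cycle, which is governed by $\lambda(t)=(u,1^{p-u})$; the parity of $b_0$ enters because the Specht module $S^{(u,1^{p-u})}$ of $kC_p$ has stable Jordan block size $p-1$ exactly when $p-u$ (equivalently $b_0$, since on the nose $u=a$ and $a\equiv -b_0$) is odd, and size $1$ when it is even. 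I would verify the base case by dimension count: $\dim S^{(a,1^{b_0})}=\binom{a+b_0-1}{b_0}=\binom{dp-1}{b_0}$, and check this is consistent with $\binom{d-1}{0}$ copies of a single nonprojective block of the stated size together with the appropriate number of projectives (of dimension $p$).

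**The inductive step.** Assume the result for $b$ and prove it for $b+1$; write $\lambda=(a-1,1^{b+1})$, $\mu=(a,1^{b+1})$ in the exact sequence above, so the quotient is $(a,1^b)$ (the inductive hypothesis partition). The key claim is that restricting the sequence to $\langle u_\alpha\rangle$ for a generic $\alpha\in k^d$ yields a split short exact sequence of $kC_p$-modules; equivalently, the connecting map in $\Ext^1_{kC_p}$ vanishes for the generic cyclic shifted subgroup. Granting this, $[\alpha]^\ast(S^{(a,1^{b+1})})\cong[\alpha]^\ast(S^{(a-1,1^{b+1})})\oplus[\alpha]^\ast(S^{(a,1^b)})$, and by Part of 4.7 of \cite{EFJPAS} the stable generic Jordan types add. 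Since $b+1=sp+(b_0+1)$, when $b_0$ is even $b_0+1$ is odd and we must produce $\binom{d-1}{s}$ blocks of size $p-1$; when $b_0$ is odd, $b_0+1$ is even (unless $b_0=p-1$, a boundary case treated separately as in Case (ii) of Theorem \ref{generic jordan type of hook partition of case p not a multiple}, where one re-indexes $s\mapsto s+1$), and we need $\binom{d-1}{s}$ blocks of size $1$. The arithmetic identity to check is exactly the Pascal-type recursion $\binom{d-1}{s}+\binom{d-1}{s+1}=\binom{d-1}{s}$ handled via the shift, mirroring the four cases of the earlier proof; the combinatorics of which $\Lambda$-sets gain or lose an element when $b_0$ crosses the parity/boundary is identical to before, so most of that bookkeeping can be quoted.

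**The main obstacle: generic splitting.** The heart of the matter is proving that $0\to S^\lambda\to S^\mu\to S^{(a,1^b)}\to 0$ splits after restriction to a generic $\langle u_\alpha\rangle$. This is where Lemma \ref{subhooks occur} and especially Lemma \ref{lemma to hook multiple of p} are designed to be used. I would argue as follows: pick a standard $\mu$-tableau $t$ whose image generates the quotient copy of $S^{(a,1^b)}$ in a controlled way (e.g. $t$ with $I_j$ lying inside the first row for all $j\ge 2$, so that by Lemma \ref{subhooks occur}(ii) the elements $g_2,\dots,g_d$ act trivially on $e_t$), so that the $\langle u_\alpha\rangle$-action on the relevant cyclic submodule is essentially governed by $g_1$ alone, i.e. by the $(u,1^{p-u})$-type data on $I_1$. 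Then Lemma \ref{lemma to hook multiple of p} gives the sharp nonvanishing $(u_\alpha-1)^m e_t\ne 0$ for $m\le p-1$, which pins down the Jordan block containing $e_t$ as the full-size $p$ block (for $b_0$ odd producing the $p-1$ stable block after removing one projective, or forcing freeness in the even case); lifting a generator of the quotient's top Jordan block to such an $e_t$ in $S^\mu$ and checking that $(u_\alpha-1)^{p-1}$ does not push it into $S^\lambda$ gives a $kC_p$-splitting of the surjection. Concretely: the splitting follows once one shows the generic Jordan type of $S^\mu{\downarrow_{\langle u_\alpha\rangle}}$ has the \emph{same} number of blocks of the top nonprojective size as the direct sum $S^\lambda\oplus S^{(a,1^b)}$ would — an inequality $\le$ is automatic from the exact sequence, and $\ge$ comes from exhibiting enough independent $e_t$'s with maximal $(u_\alpha-1)$-order via Lemma \ref{lemma to hook multiple of p}. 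I expect this direction — producing enough Jordan blocks of the correct maximal size in $S^\mu{\downarrow_{\langle u_\alpha\rangle}}$ to force the splitting — to be the genuinely delicate part, requiring a careful choice of tableaux $t$ (one for each subset of $\{2,\dots,d\}$ of size $s$ determining where the $p$-hooks sit) and a linear-independence argument among the resulting cyclic submodules.
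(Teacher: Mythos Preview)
Your plan contains a genuine error: the short exact sequence
\[
0\to S^{(a-1,1^{b+1})}\to S^{(a,1^{b+1})}\to S^{(a,1^b)}\to 0
\]
does \emph{not} generically split when $b_0<p-1$. Indeed, $(a,1^{b+1})$ is a partition of $dp+1$ with $r=1$, and the $p$-residue of $b+1$ is $b_0+1\geq 1=r$; by Corollary~\ref{corollary to hook not multiple of p}(i) its $p$-weight is $d-1$, so $S^{(a,1^{b+1})}{\downarrow_{E_d}}$ is generically \emph{free}. A short exact sequence of $kC_p$-modules with free middle term exhibits the outer terms as syzygies of one another, and it splits only if both outer terms are themselves free --- which the theorem you are trying to prove says they are not. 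So your key claim fails, and with it the additivity $[\alpha]^\ast(\text{middle})\cong[\alpha]^\ast(\text{sub})\oplus[\alpha]^\ast(\text{quotient})$ on which your inductive step rests. (Your Pascal identity $\binom{d-1}{s}+\binom{d-1}{s+1}=\binom{d-1}{s}$ is also not the right one; the relevant identity only appears at the boundary $b_0=p-1$.)

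The paper proceeds quite differently. Its base case is simply $b=0$ (the trivial module), not $b=b_0$. For the inductive step with $b_0<p-1$ it does not use your sequence at all: instead it observes that $S^{(a-1,1^b)}{\uparrow^{\mathfrak{S}_{dp}}_{\mathfrak{S}_{dp-1}}}{\downarrow_{E_d}}$ is generically free (Mackey forces every summand to be induced from a proper subgroup of $E_d$), and its Specht filtration has the middle factor $S^{(a-1,2,1^{b-1})}$ with nonempty $p$-core, hence also generically free. This makes $S^{(a-1,1^{b+1})}$ and $S^{(a,1^b)}$ generic syzygies of each other, which is exactly what flips $(1^n)$ to $((p-1)^n)$ and explains the parity-of-$b_0$ alternation. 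Only at the boundary $b_0=p-1$ does the paper invoke the branching sequence, and there it proves generic \emph{stable} splitting by an explicit map $g$: standard tableaux of $(a,1^{b+1})$ are partitioned into those with $dp+1$ at the end of the row (sent to the obvious $\lambda$-polytabloid) and those with $dp+1$ at the bottom of the column; for the latter, Lemma~\ref{lemma to hook multiple of p} together with the specific arithmetic $b_0=p-1$ forces each such $e_s$ either to be fixed by $u_\alpha$ or to lie in a free summand, so $g(e_s)=0$ suffices. Your proposed splitting argument (producing enough Jordan blocks via chosen tableaux) is both vaguer and aimed at a statement that is false outside this boundary case.
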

\begin{proof} We prove the result by induction on $b$. If $b=0$,
then $S^\mu{\downarrow_{E_d}}$ is the trivial $kE_d$-module. So it has
Jordan type $(1)$. Suppose that we know the stable generic Jordan type
of $S^\mu{\downarrow_{E_d}}$ for some $\mu=(a,1^b)$ with $b=sp+b_0$
and $0\leq b_0\leq p-1$. Let $\lambda=(a-1,1^{b+1})$. Consider two
cases, $b_0<p-1$ or $b_0=p-1$. In the case $b_0<p-1$, the
$p$-residue of $b+1$ is $b_0+1$. The $kE_d$-module
$$S^{(a-1,1^b)}{\uparrow^{\mathfrak{S}_{dp}}_{\mathfrak{S}_{dp-1}}}{\downarrow_{E_d}}$$
is generically free. On the other hand, using the Branching Theorem, the
module has a filtration with factors $S^\mu{\downarrow_{E_d}}$,
$S^{(a-1,2,1^{b-1})}{\downarrow_{E_d}}$ and
$S^{\lambda}{\downarrow_{E_d}}$. Since the $p$-residue of $b$ is
strictly less than $p-1$, the partition $(a-1,2,1^{b-1})$ has
non-empty $p$-core. So $S^{(a-1,2,1^{b-1})}{\downarrow_{E_d}}$ is a
direct summand of
$S^{(a-1,1^b)}{\uparrow^{\mathfrak{S}_{dp}}_{\mathfrak{S}_{dp-1}}}{\downarrow_{E_d}}$
and it is generically free. So the stable generic Jordan types of
$S^\mu{\downarrow_{E_d}}$ and $S^\lambda{\downarrow_{E_d}}$ are
complementary. In the case $b_0=p-1$, we have $b+1\equiv 0(\mod p)$.
By the Branching Theorem, $S^{(a,1^{b+1})}{\downarrow_{E_d}}$ has a
filtration $S^\mu{\downarrow_{E_d}}$ and
$S^\lambda{\downarrow_{E_d}}$ reading from the top. Let $\alpha\in
k^d$ be a generic point, we construct the short exact sequence
$$\xymatrix{0\ar[r]& S^\lambda{\downarrow_{\langle
u_\alpha\rangle}}\ar[r]^-f& S^{(a,1^{b+1})}{\downarrow_{\langle
u_\alpha\rangle}}\ar[r]& S^\mu{\downarrow_{\langle
u_\alpha\rangle}}\ar[r]& 0}$$ where $f$ maps each standard
$\lambda$-polytabloid $e_t$ to the standard
$(a,1^{b+1})$-polytabloid $e_{\phi(t)}$ where $\phi(t)_{ij}=t_{ij}$
if $(i,j)\neq (1,a)$ and $\phi(t)_{1a}=dp+1$. Note that the set of
standard tableaux of $(a,1^{b+1})$ is the union of
$\Omega_1$ and $\Omega_2$ with $\Omega_1\cap \Omega_2=\varnothing$ and where $\Omega_1$ is the set consisting of
standard tableaux $s$ such that $s_{1a}=dp+1$ and $\Omega_2$ is the
set consisting of standard tableaux $s$ such that $s_{b+2,1}=dp+1$.
We claim that $f$ splits in the stable $k\langle u_\alpha\rangle$-module category. Once we have
done this, we have $S^{(a,1^{b+1})}{\downarrow_{\langle
u_\alpha\rangle}}\cong S^\mu{\downarrow_{\langle
u_\alpha\rangle}}\oplus S^\lambda{\downarrow_{\langle
u_\alpha\rangle}}$. Using Theorem \ref{generic jordan type of hook
partition of case p not a multiple} with $r=1$ and $b+1=(s+1)p$, the
stable generic Jordan type of $S^{(a,1^{b+1})}{\downarrow_{E_d}}$ is
$\left (1^{\binom{d}{s+1}}\right )$. By induction hypothesis, the stable generic
Jordan type of $S^\mu{\downarrow_{E_d}}$ is $\left (1^{\binom{d-1}{s}}\right )$. So
the stable generic Jordan type of $S^\lambda{\downarrow_{E_d}}$ is
$$\left (1^{\binom{d}{s+1}-\binom{d-1}{s}}\right )=\left (1^{\binom{d-1}{s+1}}\right ).$$ This
completes the inductive step.

We want to define a map $g:S^{(a,1^{b+1})}{\downarrow_{\langle
u_\alpha\rangle}}\to S^{\lambda}{\downarrow_{\langle
u_\alpha\rangle}}$ in the stable module category such that
$gf=\id_{S^\lambda{\downarrow_{\langle u_\alpha\rangle}}}$. For any
$s\in\Omega_1$, we define $g(e_s)=e_t$ where $t$ is the unique
standard $\mu$-tableau such that $\phi(t)=s$. Let $s\in\Omega_2$. If
$(u_\alpha-1)^{p-1}e_s=0$, by Lemma \ref{lemma to hook multiple of
p} with $s=d$ (we have abused the notation $s$), for each $2\leq j\leq d$ we have $I_j\subseteq
R_1(s)$ or $I_j\subseteq C_1(s)$. Since $b_0=p-1$ and
$s_{b+2,1}=dp+1$, it follows that $I_1\subseteq C_1(s)$. So
$(u_\alpha-1)e_s=0$ if $(u_\alpha-1)^{p-1}e_s=0$. Since $e_s$ is
fixed by $u_\alpha$, we may define $g(e_s)=0$. If
$(u_\alpha-1)^{p-1}e_s\neq 0$, we claim that the set $\{e_s,
(u_\alpha-1)e_s,\ldots, (u_\alpha-1)^{p-1}e_s\}$ is $k$-linearly
independent. Suppose that we have a relation
$$a_0e_s+a_1(u_\alpha-1)e_s+\ldots+a_{p-1}(u_\alpha-1)^{p-1}e_s=0$$
with $a_0,\ldots,a_{p-1}\in k$. Note that $(u_\alpha-1)^p=0$.
Multiplying the equation by $(u_\alpha-1)^{p-1}$, we get
$a_0(u_\alpha-1)^{p-1}e_s=0$. Since $(u_\alpha-1)^{p-1}e_s\neq 0$,
we have $a_0=0$. Inductively, by multiplying $(u_\alpha-1)^i$ for
some suitable $i$ to the equation, we show that
$a_0=a_1=\ldots=a_{p-1}=0$. So $e_s$ lies inside a free summand of
$S^{(a,1^{b+1})}{\downarrow_{\langle u_\alpha\rangle}}$. In this
case, we may define $g(e_s)=0$. The map $g$ gives a splitting for $f$
in the stable $k\langle u_\alpha\rangle$-module category. The proof is complete.
\end{proof}

Combining Corollary \ref{corollary to hook not multiple of p} and
Theorem \ref{generic jordan type of hook partition of case p a
multiple}, we have our main result Theorem \ref{complexity
corresponding to hook partitions}.

\thankyou{I would like to thank my supervisor Dave Benson for
valuable discussions and Johannes Orlob for pointing out the obvious generalization of 3.2.2 \cite
{DHDN} which leads to Theorem \ref{sgjt for signed permutation module} (ii).

\end{document}